\newcommand{\VQh}{\ensuremath{\mathbb{V}_h^{\mathbb{Q}}}}
\newcommand{\VTh}{\ensuremath{\mathbb{V}_h^{\mathbb{P}}}}
\newcommand{\PiQh}{\ensuremath{\Pi_h^{\mathbb{Q}}}}
\newcommand{\PiTh}{\ensuremath{\Pi_h^{\mathbb{P}}}}
\begin{document}

\title[Orthotropic growth]{Numerical approximation of variational problems with orthotropic growth}

\author[A. Kh. Balci]{Anna Kh. Balci}
\address[A. Kh. Balci]{Charles University  in Prague, Department of Mathematical Analysis
Sokolovsk\'a 49/83, 186 75 Praha 8, Czech Republic and University of Bielefeld, Department of Mathematics,  Postfach 10 01 31, 33501 Bielefeld, Germany.}
\email{akhripun@math.uni-bielefeld.de}
\urladdr{http://www.bi-discrete.com}
\thanks{The research of A. Kh. Balci and L. Diening is funded by the Deutsche Forschungsgemeinschaft (DFG, German Research Foundation) --- SFB 1283/2 2021 --- 317210226.}
\thanks{The Research of Anna Kh. Balci was also  supported by Charles University  PRIMUS/24/SCI/020 and Research Centre program No. UNCE/24/SCI/005}

\author[L. Diening]{Lars Diening}
\address[L. Diening]{Department of Mathematics, University of Bielefeld, Postfach 10 01 31, 33501 Bielefeld, Germany.}
\email{lars.diening@math.uni-bielefeld.de}
\urladdr{http://www.bi-discrete.com}

\author[A. J. Salgado]{Abner J. Salgado}
\address{Department of Mathematics, University of Tennessee, Knoxville, TN 37996, USA}
\email{asalgad1@utk.edu}
\urladdr{https://math.utk.edu/people/abner-salgado/}
\thanks{The work of AJS is partially supported by NSF grant DMS-2111228.}

\subjclass[2023]{65N30,   
35J60,                    
35J75,                    
35J70,                    
46E30,                    
46E35,                    
}

\keywords{Orthotropic growth; nonstandard growth condition; finite elements; error estimates.}

\begin{abstract}
We consider the numerical approximation of variational problems with orthotropic growth, that is those where the integrand depends strongly on the coordinate directions with possibly different growth in each direction.
Under realistic regularity assumptions we derive optimal error estimates. These estimates depend on the existence of an orthotropically stable interpolation operator. Over certain meshes we construct an orthotropically stable interpolant that is also a projection. Numerical experiments illustrate and explore the limits of our theory.
\end{abstract}

\maketitle


\section{Introduction}
\label{sec:Intro}

The purpose of this work is to study the approximation of variational problems with orthotropic growth. To be precise, let $\Omega \subset \Real^d$, with $d >1$, be a bounded Lipschitz domain. We are interested in the finite element approximation of the solution to the following boundary value problem
\begin{equation}
\label{eq:TheProblem}
    - \sum_{i=1}^d \partial_i\left( A_i(\partial_i u) \right) = f, \quad \text{ in } \Omega, \qquad
    u=0, \quad \text{ on } \partial\Omega,
\end{equation}
where the functions $A_i : \Real \to \Real$ are assumed to satisfy non-standard $\varphi_i$--growth and $\varphi_i$--monotonicity conditions in the sense of \cite{MR2418205}; see Section~\ref{sub:Growth} for further details and examples.

Apart from the intrinsic interest in the study of problems like \eqref{eq:TheProblem}, they appear as models of materials with anisotropic behavior. For this reason, they have gathered some attention in the literature. References \cite{MR3438591,XiaThesis,MR2483260,MR2978583,MR3048250,MR2813447,MR2503835,MR3097236,MR3119074,MR3169023} have studied questions of existence and uniqueness for \eqref{eq:TheProblem} and related elliptic problems. Parabolic problems with a similar operator have been considered in \cite{MR3073153,MR2861754,MR2305342}. Regularity issues have been discussed in \cite{MR3018035,MR4377996,MR3168616,MR3018035,MR2878481,MR4502898,antonini2023global,MR3798025,MR3749368}. The prototypical example of a problem like \eqref{eq:TheProblem} is the orthotropic $\bfp$--Laplacian, see Example~\ref{ex:OrthoPLapcontd}. References \cite{MR2972430} and \cite{MR2968618,MR3115150} have studied the limiting cases of $p_i \to 1$ and $p_i \to \infty$, respectively.

We immediately comment that, even in the case of the $\bfp$--Laplacian, there is a substantial difference between the problem under consideration and its isotropic counterpart, \ie the classical $p$--Laplacian. To see this we note that the mapping $\bfxi \mapsto |\bfxi |^p$, which defines the $p$--Laplacian, degenerates only at the origin, in the sense that its Hessian is positive definite on all $\Real^d \setminus\{\boldsymbol0\}$. On the other hand, the map $\bfxi \mapsto \|\bfxi \|_{\ell^p}^p$, which generates the $\bfp$--Laplacian, degenerates on an unbounded set. Namely, any hyperplane that contains the origin and is perpendicular to a coordinate axis. The general situation, \ie the mapping that generates \eqref{eq:TheProblem} can only be more degenerate.

In lieu of the interest presented above, it is important to devise approximation methods, say via finite elements, for nonlinear problems with orthotropic structure. In this regard, we recall classical works like \cite{MR0388811,MR0672607} for the $p$--Laplacian, and the improvements and so-called pseudonorm error estimates of \cite{MR1192966}. We mention these as they became the inspiration to consider Orlicz space estimates for this problem in \cite{MR2317830}. Many of the techniques and results in this work can be considered a generalization of \cite{MR2317830} to the orthotropic case.

We organize our presentation as follows. In Section~\ref{sec:Prelim} we establish notation and present the main assumptions over the collections of functions detailing the operator and growth conditions, \ie $\{ A_i \}_{i=1}^d$ and $\{\varphi_i\}_{i=1}^d$, respectively. This is followed by a description of the main properties of the continuous problem that are relevant to our discussion. Section~\ref{sec:FEM} details the finite element scheme and its analysis. We obtain optimal error estimates with realistic regularity assumptions. A crucial step in the derivation of error estimates for the presented method is the construction of an interpolant $\Pi_h$ that is locally and \emph{orthotropically} stable. By this we mean that, for every cell in our mesh $T$ and all $i \in \{1, \ldots, d \}$ we have
\begin{equation}
\label{eq:BNS}
  \dashint_T | \partial_i \Pi_h w | \diff{x} \lesssim \dashint_{\calN_T} |\partial_i w | \diff{x}, \qquad \forall w \in W^{1,1}_0(\Omega),
\end{equation}
where $\calN_T$ is the patch of $T$. Over certain domains and classes of meshes we construct operators that satisfy this property in Section~\ref{sec:interpolant}. Some numerical experiments, aimed to explore the sharpness of our mesh restrictions, are finally discussed in Section~\ref{sec:Numerics}.

\section{Notation and preliminaries}
\label{sec:Prelim}

Throughout the course of our discussion, the relation $A \lesssim B$ will denote $A \leq c B$ for a nonessential constant that may change at each occurrence. $A \gtrsim B$ means that $B \lesssim A$, and $A \eqsim B$ is shorthand for $A \lesssim B \lesssim A$. For $\bfxi \in \Real^d$ its Euclidean norm is $|\bfxi|$.

We shall assume that $\Omega \subset \Real^d$ is a Lipschitz domain. Additional assumptions will be imposed when dealing with discretization. Whenever $U \subset \Real^d$ is open, we denote by $|U|$ its $d$--dimensional Lebesgue measure. For $w \in L^1_{\text{loc}}(\Real^d)$ we define
\[
  \dashint_U w \diff{x} = \frac1{|U|}\int_U w \diff{x}.
\]

\subsection{\Nfuncs{} and shifted \Nfuncs}
\label{sub:Orlicz}

We say that $\Phi : [0,\infty) \to [0,\infty)$ is an \Nfunc{} if it is differentiable and its derivative $\Phi'$ is nondecreasing, right continuous, $\Phi'(t)>0$ for $t>0$, and it satisfies $\Phi'(0) =0$. We observe that every \Nfunc{} is convex. We say that $\Phi$ satisfies the $\Delta_2$ condition ($\Phi \in \Delta_2$) if
\[
  \Phi(2t)\lesssim \Phi(t), \quad \forall t>0,
\]
and set $\Delta_2(\Phi)$ to be the smallest constant for which the inequality above holds. For $\Phi$ an \Nfunc, we set
\[
  (\Phi')^{-1}(t) = \sup\left\{ s \in \Real : \Phi(s) \leq t \right\}
\]
and
\[
  \Phi^*(t) = \int_0^t (\Phi')^{-1}(s) \diff s.
\]
This is again an \Nfunc{} which we call the \emph{complementary} function to $\Phi$. Notice that this pair satisfies the following generalization of Young's inequality
\[
  st \leq \Phi(s) + \Phi^*(t), \quad \forall s,t >0.
\]
Important examples are in our context
\[
  \Phi_p(t) = \frac1p t^p, \quad p>1, \qquad \Phi_{p,\delta}(t) = \frac 1p (\delta^2+t^2)^{\frac p2}, \quad \delta \ge 0.
\]
Notice that $\Phi_p, \Phi_{p,\delta} \in \Delta_2$. We refer the reader to \cite{MR0126722,MR3024912,MR2790542} for a further properties and examples of $N$--functions.

We now turn our attention to so-called \emph{shifted} $N$--functions. Given an \Nfunc{} $\Phi$ with $\Phi,\Phi^* \in \Delta_2$, we define the family $\{\Phi_a\}_{a\geq0}$ of shifted \Nfuncs{} via
\[
  \Phi_a'(t)= \frac{t}{a+t} \Phi'(a+t), \qquad a \geq 0.
\]
It is known that $\sup_{a \geq 0} \Delta_2(\Phi_a) < \infty$. See the Appendices of \cite{MR2418205,MR2317830, DFTW20} for many properties of shifted $N$--functions. We assume that our $N$--functions are uniformly convex in the sense that
\begin{equation}
\label{eq:GrowthNfunc}
  \Phi'(t) \eqsim t \Phi''(t), \quad \forall t>0,
\end{equation}
In this case
\begin{equation}
\label{eq:Shif2ndDeriv}
  \Phi''( |s| + |t|)|s-t| \eqsim \Phi'_{|s|}(|s-t|).
\end{equation}
Finally, we have the following shifted version of Young's inequality: for all $\vare>0$ there is $C_\vare>0$ for which
\begin{equation}
\label{eq:ShiftYoung}
  \Phi_a'(t) s \leq \vare \Phi_a(t) + C_\vare \Phi_a(s),
\end{equation}
for all $t,s,a\geq0$.

\subsection{Growth conditions}
\label{sub:Growth}

For the rest of this discussion we shall assume the following nonstandard growth conditions for the differential operators.

\begin{assume}[growth]
\label{assume:growth}
The family $\{ \varphi_i\}_{i=1}^d \subset \Delta_2 \cap C^2(0,\infty)$, and \eqref{eq:GrowthNfunc} holds for all of them. In addition, the family of functions $\{A_i \}_{i=1}^d \subset C(\Real)$ satisfies $A_i(0) = 0$ and the following monotonicity and growth conditions
\begin{equation}
\label{eq:growthi}
  \begin{aligned}
    \left( A_i( \xi) - A_i(\eta) \right) (\xi - \eta) &\gtrsim \varphi_i''( |\xi| + |\eta| )| \xi - \eta|^2, \\
    | A_i(\xi) - A_i(\eta) | &\lesssim \varphi_i''( |\xi| + |\eta| )|\xi - \eta|,
  \end{aligned}
\end{equation}
for all $\xi, \eta \in \Real$ and $i \in \{1, \ldots, d\}$.
\end{assume}

We comment that, see \cite[Lemma 21]{MR2418205}, for every  $\Phi \in \Delta_2 \cap C^2(0,\infty)$ we can construct $A$ for which \eqref{eq:growthi} holds.

Given $\Phi \in \Delta_2$ we define the \Nfunc{} $\Psi$ via~$\Psi(0)=0$ and 
\begin{equation}
\label{eq:ConstructFlux1}
  \Psi'(t) = \sqrt{t\,\Phi'(t)}.
\end{equation}
Then we have that $\Psi \in\Delta_2$ and $\Psi''(t) \eqsim \sqrt{\Phi''(t)}$ for all $t>0$. It is possible to show then that, if $\{ \varphi_i\}_{i=1}^d$ satisfies Assumption~\ref{assume:growth}, the families $\{\psi_i \}_{i=1}^d$ and $\{V_i\}_{i=1}^d$ defined through \eqref{eq:ConstructFlux1} and
\[
  V_i(\xi ) = \psi_i'( |\xi| )\frac{ \xi }{|\xi|}
\]
will satisfy Assumption~\ref{assume:growth} as well.

\begin{example}[orthotropic $\bfp$--Laplacian]
\label{ex:OrthoPLap}
The main example of functions satisfying Assumption~\ref{assume:growth} is as follows: Let $\{p_i\}_{i=1}^d \subset (1,\infty)$. Define
\[
  A_i(t) = |t|^{p_i-2}t.
\]
The operator in this case, which we call the orthotropic $\bfp$--Laplacian, reads
\[
  -\LAP_{\bfp} w = -\sum_{i=1}^d \partial_i \left( |\partial_i w|^{p_i-2} \partial_i w \right).
\]
In this setting, we have that
\[
  \varphi_i(t) = \frac1{p_i} t^{p_i}, \qquad \psi_i(t) =\frac{2}{p_i+2} t^{\frac {p_i+2}{2}}, \qquad V_i(t) = \abs{t}^{\tfrac{p_i-2}2}t.
\]
\end{example}

Given $\{A_i\}_{i=1}^d$ we define the vector fields $\bfA : \Real^d \to \Real^d$ and $\bfV : \Real^d \to \Real^d$, as having coordinates
\[
  \bfA(\bfxi)_i = A_i(\bfxi_i), \qquad \bfV(\bfxi)_i = V_i(\bfxi_i).
\]
Notice that, with this notation, the equation in \eqref{eq:TheProblem} reads $-\DIV(\bfA(\GRAD u)) = f$.

For families that satisfy Assumption~\ref{assume:growth} the following relations hold.

\begin{prop}[equivalences]
\label{prop:UsefulEquivalence}
Let the families of functions $\{\varphi_i\}_{i=1}^d$ and $\{A_i\}_{i=1}^d$ satisfy Assumption~\ref{assume:growth}. Then, for every $\bfxi, \bfeta \in \Real^d$, we have
\begin{align*}
  \left( \bfA(\bfxi) - \bfA(\bfeta) \right) \cdot \left( \bfxi - \bfeta \right)
    &\eqsim \left| \bfV(\bfxi) - \bfV(\bfeta) \right|^2
    \eqsim \sum_{i=1}^d \varphi_{i,|\xi_i|}(|\bfxi_i - \bfeta_i|) \\
    &\eqsim \sum_{i=1}^d \varphi_i''(|\bfxi_i| + |\bfeta_i| )|\bfxi_i - \bfeta_i |^2
\end{align*}
and
\begin{align*}
  \bigabs{\bfA(\bfxi) - \bfA(\bfeta)} \eqsim \phi_{i,\abs{\xi_i}}'(\abs{\bfxi_i- \bfeta_i}).
\end{align*}
\end{prop}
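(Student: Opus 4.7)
The plan is to exploit the fact that every quantity in the proposition is additive in the coordinate index~$i$. Since $\bfA(\bfxi)_i = A_i(\xi_i)$ and $\bfV(\bfxi)_i = V_i(\xi_i)$, both the inner product and $|\bfV(\bfxi)-\bfV(\bfeta)|^2$ split as $\sum_{i=1}^d$ of scalar pieces that match the corresponding summands on the right-hand sides. Thus the whole proposition reduces to a family of scalar equivalences, proved term-by-term for fixed~$i$ and then summed.

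For the scalar chain linking the $A_i$-term and the $\varphi_i''$-term, the lower bound $(A_i(\xi_i)-A_i(\eta_i))(\xi_i-\eta_i) \gtrsim \varphi_i''(|\xi_i|+|\eta_i|)|\xi_i-\eta_i|^2$ is the first line of \eqref{eq:growthi}, and the upper bound follows by Cauchy--Schwarz combined with the second line. The equivalence with the shifted $N$-function term is obtained from \eqref{eq:Shif2ndDeriv}: multiplying it by $|\xi_i-\eta_i|$ gives $\varphi_i''(|\xi_i|+|\eta_i|)|\xi_i-\eta_i|^2 \eqsim \varphi_{i,|\xi_i|}'(|\xi_i-\eta_i|)|\xi_i-\eta_i|$, and then the standard $\Delta_2$-type identity $\varphi_{i,a}'(t)\,t \eqsim \varphi_{i,a}(t)$ (which follows from \eqref{eq:GrowthNfunc} applied to the shifted family, whose $\Delta_2$ constants are uniformly controlled) closes the chain.

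The step that requires the most care, though not genuinely hard, is the $\bfV$-equivalence. By the remark in the text, $\{\psi_i\}$ and $\{V_i\}$ also satisfy Assumption~\ref{assume:growth}, so the scalar analogue of \eqref{eq:growthi} for $V_i$ yields simultaneously
\[
(V_i(\xi_i)-V_i(\eta_i))(\xi_i-\eta_i) \gtrsim \psi_i''(|\xi_i|+|\eta_i|)|\xi_i-\eta_i|^2, \qquad
|V_i(\xi_i)-V_i(\eta_i)| \lesssim \psi_i''(|\xi_i|+|\eta_i|)|\xi_i-\eta_i|.
\]
Cauchy--Schwarz on the first estimate gives $\psi_i''(|\xi_i|+|\eta_i|)|\xi_i-\eta_i| \lesssim |V_i(\xi_i)-V_i(\eta_i)|$; squaring this and using the construction identity $\psi_i''(t)^2 \eqsim \varphi_i''(t)$ (from \eqref{eq:ConstructFlux1}) delivers the lower bound $\varphi_i''(|\xi_i|+|\eta_i|)|\xi_i-\eta_i|^2 \lesssim |V_i(\xi_i)-V_i(\eta_i)|^2$. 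Squaring the upper estimate and using the same identity gives the matching upper bound.

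For the final display, I read it as the componentwise statement $|A_i(\xi_i)-A_i(\eta_i)| \eqsim \varphi_{i,|\xi_i|}'(|\xi_i-\eta_i|)$. The upper bound is the second line of \eqref{eq:growthi} followed by \eqref{eq:Shif2ndDeriv}; the lower bound comes from dividing the already-established inequality $(A_i(\xi_i)-A_i(\eta_i))(\xi_i-\eta_i) \gtrsim \varphi_i''(|\xi_i|+|\eta_i|)|\xi_i-\eta_i|^2$ by $|\xi_i-\eta_i|$ and invoking \eqref{eq:Shif2ndDeriv} once more.
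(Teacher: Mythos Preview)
Your proposal is correct and follows essentially the same approach as the paper: reduce to componentwise scalar equivalences and sum over $i$. The paper's proof simply cites \cite[Lemma~3]{MR2418205} or \cite[Lemma~41]{DFTW20} for the scalar equivalences and adds, whereas you unpack those scalar results directly from Assumption~\ref{assume:growth}, \eqref{eq:Shif2ndDeriv}, and the relation $\psi_i''(t)^2 \eqsim \varphi_i''(t)$; the overall strategy is the same.
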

\begin{proof}
It is sufficient to apply componentwise \cite[Lemma 3]{MR2418205} or \cite[Lemma 41]{DFTW20} and add the results.
\end{proof}

\subsection{The continuous problem}
\label{sub:TheProblem}

Let us collect now some results regarding \eqref{eq:TheProblem} that will be useful for the analysis of our numerical scheme. In what follows, given $\{\varphi_i\}_{i=1}^d$ that satisfies Assumption~\ref{assume:growth}, we set $W^{1,\bvarphi}_0(\Omega)$ to be the closure of $C_0^\infty(\Omega)$ with respect to the norm
\[
  \| w \|_{\bvarphi,\Omega} = \int_\Omega \sum_{i=1}^d \|\partial_i w \|_{\varphi_i,\Omega}, \qquad
  \| w \|_{\varphi_i,\Omega} = \inf\left\{ \lambda > 0: \int_\Omega \varphi_i\left( \frac{|w|}\lambda \right) \diff{x} \leq 1 \right\}.
\]
With this notation, we understand \eqref{eq:TheProblem} in the weak sense: $u \in W^{1,\bvarphi}_0(\Omega)$ is a weak solution to \eqref{eq:TheProblem} if
\[
  \int_\Omega \bfA(\GRAD u) \cdot \GRAD v \diff{x}= \int_\Omega f v \diff{x}, \quad \forall v \in W^{1,\bvarphi}_0(\Omega).
\]

The main properties of problem \eqref{eq:TheProblem} are summarized below.

\begin{thm}[properties of $u$]
\label{thm:PropertiesSoln}
Let $f \in \Linf$, then problem \eqref{eq:TheProblem} has a unique weak solution $u \in W^{1,\bvarphi}_0(\Omega)$.
\end{thm}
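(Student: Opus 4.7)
The plan is to recast \eqref{eq:TheProblem} as an abstract operator equation $Tu = F$ on the anisotropic Orlicz--Sobolev space $W^{1,\bvarphi}_0(\Omega)$ and invoke the Browder--Minty theorem for monotone, coercive, hemicontinuous operators on a reflexive Banach space. Define $T : W^{1,\bvarphi}_0(\Omega) \to (W^{1,\bvarphi}_0(\Omega))^*$ by $\langle Tu, v\rangle = \int_\Omega \bfA(\GRAD u)\cdot \GRAD v\,dx$ and $F \in (W^{1,\bvarphi}_0(\Omega))^*$ by $F(v) = \int_\Omega fv\,dx$; a weak solution is then any $u$ with $Tu = F$.

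Before the monotone-operator machinery can run, I first need the functional-analytic set-up for $W^{1,\bvarphi}_0(\Omega)$. Since each $\varphi_i$ and its complementary $\varphi_i^*$ lie in $\Delta_2$ by Assumption~\ref{assume:growth}, the scalar Orlicz spaces $L^{\varphi_i}(\Omega)$ are reflexive and separable; the isometric embedding $w \mapsto (\partial_i w)_{i=1}^d$ into $\prod_i L^{\varphi_i}(\Omega)$ realizes $W^{1,\bvarphi}_0(\Omega)$ as a closed subspace of a reflexive separable space, so it inherits both properties. A coordinatewise Orlicz--Poincar\'e inequality (applied to extensions by zero from the bounded Lipschitz domain $\Omega$) then yields $\|w\|_{L^1(\Omega)} \lesssim \|w\|_{\bvarphi,\Omega}$, so $F$ is a bounded linear functional whenever $f \in \Linf$.

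Next, I would verify the remaining Browder--Minty hypotheses. \emph{Well-definedness} of $T$ uses the pointwise bound $|A_i(t)| = |A_i(t) - A_i(0)| \lesssim \varphi_i''(|t|)|t| \eqsim \varphi_i'(|t|)$, which follows from \eqref{eq:growthi} and \eqref{eq:GrowthNfunc}; the generalized Young inequality then puts $A_i(\partial_i u) \in L^{\varphi_i^*}(\Omega)$, and $|\langle Tu, v\rangle|$ is controlled by $\|u\|_{\bvarphi,\Omega}$ times $\|v\|_{\bvarphi,\Omega}$. \emph{Strict monotonicity} is immediate from the first line of \eqref{eq:growthi}:
\[
  \langle Tu - Tv, u - v\rangle \gtrsim \sum_{i=1}^d \int_\Omega \varphi_i''(|\partial_i u| + |\partial_i v|)\,|\partial_i u - \partial_i v|^2 \,dx,
\]
which vanishes only when $\partial_i u = \partial_i v$ a.e.\ for all $i$, hence $u \equiv v$ by Poincar\'e. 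Taking $v = 0$ and using $A_i(t)t \gtrsim \varphi_i''(|t|)|t|^2 \eqsim \varphi_i(|t|)$ yields $\langle Tu, u\rangle \gtrsim \sum_i \int_\Omega \varphi_i(|\partial_i u|)\,dx$, and the $\Delta_2$ relation between modular and Luxemburg norm promotes this to the coercivity $\langle Tu,u\rangle / \|u\|_{\bvarphi,\Omega} \to \infty$ as $\|u\|_{\bvarphi,\Omega} \to \infty$. \emph{Hemicontinuity} of $t \mapsto \langle T(u+tv), w\rangle$ follows from continuity of each $A_i$ together with dominated convergence against the same growth envelope.

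The step I expect to be the main obstacle is not the monotone-operator part but the opening paragraph: verifying reflexivity, separability, density of $C_0^\infty(\Omega)$ in the Luxemburg norm, and the Poincar\'e-type estimate in the \emph{anisotropic} Orlicz setting. All of these are by now standard in the scalar Orlicz case under $\varphi_i, \varphi_i^* \in \Delta_2$ and should transfer coordinatewise to $W^{1,\bvarphi}_0(\Omega)$, but they are the structural facts on which everything else hinges. Once they are in place, Browder--Minty delivers existence, and strict monotonicity delivers uniqueness.
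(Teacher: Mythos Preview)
Your proposal is correct and is precisely the ``standard arguments of variational problems'' that the paper invokes in its one-line proof; you have simply unpacked what those arguments are (reflexivity via $\Delta_2$, strict monotonicity and coercivity from \eqref{eq:growthi} and \eqref{eq:GrowthNfunc}, hemicontinuity, then Browder--Minty).
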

\begin{proof}
Existence and uniqueness follows from standard arguments of variational problems.
\end{proof}

In order to carry out an approximation theory for this problem, the following assumption shall be required.

\begin{assume}[regularity]
\label{assume:regularity}
The function $u \in W^{1,\bvarphi}_0(\Omega)$ that solves \eqref{eq:TheProblem} satisfies $\bfV(\GRAD u) \in \bfW^{1,2}(\Omega)$.  
\end{assume}

As mentioned in the Introduction, the operator in question is degenerate on an unbounded set and this makes proving any regularity rather challenging. Nevertheless, we conjecture that this is the correct optimal regularity for this problem. Our reasoning is twofold. First, this is the case for the isotropic problem. Second, for the $\bfp$--Laplacian of Example~\ref{ex:OrthoPLap} this has been shown in \cite{MR3749368,MR3798025}, under some conditions on the exponents $\bfp$ and the right hand side $f$.

\begin{example}[orthotropic $\bfp$--Laplacian (continued)]
\label{ex:OrthoPLapcontd}
Let us specialize these results to the case of Example~\ref{ex:OrthoPLap}. Define the orthotropic Sobolev space
\[
  W^{1,\bfp}_0(\Omega) = \left\{ w \in W_0^{1,1}(\Omega): \partial_i w \in L^{p_i}(\Omega) \right\}, \quad \| w \|_{W^{1,\bfp}_0(\Omega)} = \sum_{i=1}^d \| \partial_i w\|_{L^{p_i}(\Omega)}.
\]
and seek for $u \in W^{1,\bfp}_0(\Omega)$ such that
\[
  \int_\Omega \sum_{i=1}^d |\partial_i u|^{p_i-2} \partial_i u \partial_i v \diff{x} = \int_\Omega f v \diff{x}, \quad \forall v \in W^{1,\bfp}_0(\Omega).
\]
Theorem~\ref{thm:PropertiesSoln} asserts then the existence and uniqueness of solutions.

Notice that even in the case $p_i = p \neq 2$ this really is an orthotropic problem, the operator does not coincide with the classical $p$--Laplacian.
\end{example}

\section{Discretization}
\label{sec:FEM}

Let us now proceed with the approximation of problem \eqref{eq:TheProblem}. As it is clear that the problem is very coordinate dependent we shall, in what follows, assume that $\Omega = (0,1)^d$. We let $\{\Triang \}_{h>0}$ be a quasiuniform family of meshes over $\Omega$. For every $h>0$ we assume that every $T \in \Triang$ (which is  called an element) is closed and isoparametrically equivalent to either the unit simplex or the unit cube. The finite element space is
\[
  \Fespace = \left\{ w \in W^{1,1}_0(\Omega) : w_{|T} \in \calP(T), \ T \in \Triang \right\},
\]
where $\calP(T) = \polP_1$ if the elements are equivalent to a simplex, and $\calP(T) = \polQ_1$ if the equivalence is to a cube. For $T \in \Triang$ its patch is
\[
  \calN_T = \bigcup\left\{ K \in \Triang : T\cap K \neq \emptyset \right\}.
\]
Finally we recall that, as a consequence of quasiuniformity, we have that $|T| \eqsim |\calN_T| \eqsim \diam(T)^d \eqsim h^d$.

To approximate the solution of \eqref{eq:TheProblem} we seek for $u_h \in\Fespace$ such that
\begin{equation}
\label{eq:TheApprox}
    \int_\Omega \bfA( \GRAD u_h ) \cdot \GRAD v_h \diff{x} = \int_\Omega f v_h \diff{x}, \quad \forall v_h \in \Fespace.
\end{equation}

Existence and uniqueness of a solution to \eqref{eq:TheApprox} is as in Theorem~\ref{thm:PropertiesSoln}. The approximation properties of this numerical scheme are as follows.

\begin{prop}[best approximation]
\label{prop:Cea}
Let $u \in W^{1,\bvarphi}_0(\Omega)$ be the weak solution to \eqref{eq:TheProblem} and $u_h \in \Fespace$ solve \eqref{eq:TheApprox}. We have
\begin{equation}
\label{eq:Cea}
  \| \bfV(\GRAD u) - \bfV(\GRAD u_h) \|_\Ldeuxd \lesssim \inf_{v_h \in \Fespace} \| \bfV(\GRAD u) - \bfV(\GRAD v_h) \|_\Ldeuxd.
\end{equation}
\end{prop}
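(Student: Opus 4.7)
The plan is to carry out the standard near-best approximation argument for monotone problems, adapted to the orthotropic setting by invoking Proposition~\ref{prop:UsefulEquivalence} componentwise. The backbone is the trio: coercivity/ellipticity in the form of $(\bfA(\bfxi)-\bfA(\bfeta))\cdot(\bfxi-\bfeta)\eqsim |\bfV(\bfxi)-\bfV(\bfeta)|^2$, Galerkin orthogonality, and the shifted Young inequality \eqref{eq:ShiftYoung}.

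First, I would subtract the equations satisfied by $u$ and $u_h$: since $\Fespace\subset W^{1,\bvarphi}_0(\Omega)$, one obtains the orthogonality
\[
  \int_\Omega \left(\bfA(\GRAD u)-\bfA(\GRAD u_h)\right)\cdot \GRAD w_h \diff{x}=0, \qquad \forall w_h \in \Fespace.
\]
Next, I would use Proposition~\ref{prop:UsefulEquivalence} to write
\[
  \|\bfV(\GRAD u)-\bfV(\GRAD u_h)\|_\Ldeuxd^2 \eqsim \int_\Omega \left(\bfA(\GRAD u)-\bfA(\GRAD u_h)\right)\cdot\left(\GRAD u-\GRAD u_h\right)\diff{x},
\]
and for an arbitrary $v_h\in\Fespace$ decompose $\GRAD u-\GRAD u_h=(\GRAD u-\GRAD v_h)+(\GRAD v_h-\GRAD u_h)$. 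The orthogonality kills the second piece, leaving
\[
  \|\bfV(\GRAD u)-\bfV(\GRAD u_h)\|_\Ldeuxd^2 \lesssim \sum_{i=1}^d \int_\Omega \bigabs{A_i(\partial_i u)-A_i(\partial_i u_h)}\,\bigabs{\partial_i u-\partial_i v_h}\diff{x}.
\]

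Now comes the step that is tailored to the orthotropic structure. Componentwise, Proposition~\ref{prop:UsefulEquivalence} gives $\abs{A_i(\partial_i u)-A_i(\partial_i u_h)}\eqsim \varphi_{i,|\partial_i u|}'(|\partial_i u-\partial_i u_h|)$. Applying the shifted Young inequality \eqref{eq:ShiftYoung} with shift $a=|\partial_i u|$, for each $\vare>0$,
\[
  \bigabs{A_i(\partial_i u)-A_i(\partial_i u_h)}\,\bigabs{\partial_i u-\partial_i v_h}\leq \vare\, \varphi_{i,|\partial_i u|}(|\partial_i u-\partial_i u_h|)+C_\vare\, \varphi_{i,|\partial_i u|}(|\partial_i u-\partial_i v_h|).
\]
Summing over $i$ and using once more the equivalences $\sum_i \varphi_{i,|\partial_i u|}(|\partial_i u-\partial_i w|)\eqsim |\bfV(\GRAD u)-\bfV(\GRAD w)|^2$ (Proposition~\ref{prop:UsefulEquivalence} applied with $w=u_h$ and with $w=v_h$) yields
\[
  \|\bfV(\GRAD u)-\bfV(\GRAD u_h)\|_\Ldeuxd^2 \lesssim \vare\, \|\bfV(\GRAD u)-\bfV(\GRAD u_h)\|_\Ldeuxd^2 + C_\vare \|\bfV(\GRAD u)-\bfV(\GRAD v_h)\|_\Ldeuxd^2.
\]
Finally, fixing $\vare$ small enough to absorb the first term on the left and taking the infimum over $v_h\in\Fespace$ gives \eqref{eq:Cea}.

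The only subtle point is making sure the shift used in Young's inequality is the one adapted to the \emph{continuous} solution $u$ (not $u_h$), so that the right-hand side depends on $\|\bfV(\GRAD u)-\bfV(\GRAD v_h)\|$ with a shift that only depends on data; if instead one shifted by $|\partial_i u_h|$, the resulting term would still depend on $u_h$ and no true best-approximation estimate would follow. Apart from this bookkeeping, the proof is routine once the componentwise equivalences of Proposition~\ref{prop:UsefulEquivalence} are in hand.
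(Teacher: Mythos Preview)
Your proof is correct and follows essentially the same route as the paper: Galerkin orthogonality, the componentwise equivalences of Proposition~\ref{prop:UsefulEquivalence} to pass between $|\bfV(\GRAD u)-\bfV(\GRAD w)|^2$ and $\sum_i \varphi_{i,|\partial_i u|}(|\partial_i(u-w)|)$, the shifted Young inequality \eqref{eq:ShiftYoung} with shift $|\partial_i u|$, and absorption of the $\vare$ term. The only cosmetic difference is that the paper works in the shifted $N$--function form throughout and converts to the $\bfV$--form at the very end, whereas you convert at both ends; the content is the same.
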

\begin{proof}
The derivation of the best approximation property \eqref{eq:Cea} is a small modification of the arguments in \cite[Lemma 5.2]{MR2317830}. We begin by observing that, in this context, Galerkin orthogonality can be expressed as
\[
  \int_\Omega \left( \bfA(\GRAD u) - \bfA(\GRAD u_h) \right) \cdot \GRAD v_h \diff{x} = 0, \quad \forall v_h \in \Fespace.
\]
We use then Proposition~\ref{prop:UsefulEquivalence}, Galerkin orthogonality, the growth condition \ref{eq:growthi}, and finally \eqref{eq:Shif2ndDeriv} to conclude
\begin{align*}
  \int_\Omega \sum_{i=1}^d \varphi_{i,|\partial_i u|}\left(|\partial_i (u - u_h)|\right) \diff{x}
  & \lesssim \int_\Omega \left( \bfA(\GRAD u) - \bfA(\GRAD u_h) \right) \cdot \GRAD(u -u_h) \diff{x}
  \\
  &= \int_\Omega \left( \bfA(\GRAD u) - \bfA(\GRAD u_h) \right) \cdot \GRAD(u -v_h) \diff{x}
  \\
  &\lesssim \sum_{i=1}^d \int_\Omega \varphi_{i,|\partial_i u|}'\left( |\partial_i(u-u_h)| \right) \left| \partial_i(u - v_h) \right| \diff{x}.
\end{align*}
Next, the shifted version of Young's inequality \eqref{eq:ShiftYoung} shows that
\begin{align*}
  \int_\Omega \varphi_{i,|\partial_i u|}'\left( |\partial_i(u-u_h)| \right) \left| \partial_i(u - v_h) \right| \diff{x} &\leq
  \vare \int_\Omega \varphi_{i,|\partial_i u|}\left( |\partial_i(u-u_h)| \right) \diff{x} \\
  &+ C_\vare\int_\Omega \varphi_{i,|\partial_i u|}\left( |\partial_i(u-v_h)| \right) \diff{x}.
\end{align*}
In conclusion, the previous two estimates and a judicious choice of $\vare$ give us the estimate
\[
  \int_\Omega \sum_{i=1}^d \varphi_{i,|\partial_i u|}\left(|\partial_i (u - u_h)|\right) \diff{x} \lesssim
  \int_\Omega \sum_{i=1}^d \varphi_{i,|\partial_i u|}\left(|\partial_i (u - v_h)|\right) \diff{x}.
\]
Proposition~\ref{prop:UsefulEquivalence} then implies that
\[
  \| \bfV( \GRAD u ) - \bfV(\GRAD u_h) \|_\Ldeuxd^2 \lesssim \| \bfV( \GRAD u ) - \bfV(\GRAD v_h) \|_\Ldeuxd^2,
\]
as claimed.
\end{proof}

To obtain rates of convergence, in addition to regularity, we require the existence of a suitable interpolation operator.

\begin{assume}[interpolant]
\label{assume:interpolant}
There is an interpolation operator $\Pi_h : W^{1,1}_0(\Omega) \to \Fespace$ that is:
\begin{itemize}[leftmargin=0.5cm]
  \item Locally and orthotropically stable, \ie \eqref{eq:BNS} holds.
  
  \item It is locally invariant, in the sense that for every $T \in \Triang$, and all $w \in W^{1,1}_0(\Omega)$ such that $w_{|\calN_T} \in \calP(T)$, then
  \[
    (\Pi_h w)_{|T} = w_{|T}.
  \]
\end{itemize}
\end{assume}

We immediately comment that any projection $\Pi_h : W^{1,1}_0(\Omega) \to \Fespace$ that verifies \eqref{eq:BNS}, satisfies Assumption~\ref{assume:interpolant}. 

We will construct such an interpolant in Section~\ref{sec:interpolant}. A useful property of such interpolant is the following anisotropic Orlicz stability.

\begin{lemma}[orthotropic stability]
\label{lem:OrliczStability}
Assume that $\Fespace$ verifies Assumption~\ref{assume:interpolant}. Then, for every $T \in \Triang$, each $i \in \{ 1, \ldots, d \}$, and all $a\geq 0$, we have
\[
  \dashint_T \varphi_{i,a}(|\partial_i \Pi_h w|) \diff{x} \lesssim \dashint_{\calN_T} \varphi_{i,a}(|\partial_i  w|) \diff{x}, \qquad \forall w \in W^{1,1}_0(\Omega),
\]
where the hidden constants are independent of $a$.
\end{lemma}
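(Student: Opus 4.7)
The strategy is to bootstrap the assumed $L^1$ orthotropic stability \eqref{eq:BNS} to the shifted Orlicz setting by sandwiching it between a polynomial inverse estimate on the element $T$ and Jensen's inequality on the patch $\calN_T$, using throughout that the family $\{\varphi_{i,a}\}_{a\geq 0}$ has uniformly bounded $\Delta_2$ constants (as recalled in Section~\ref{sub:Orlicz}). The invariance clause of Assumption~\ref{assume:interpolant} is not directly needed here; it enters only when proving that a concrete $\Pi_h$ satisfies \eqref{eq:BNS}, which is the content of Section~\ref{sec:interpolant}.

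The first ingredient is a polynomial norm equivalence. Because $\Pi_h w\in\Fespace$, the restriction $\partial_i\Pi_h w|_T$ lives in a fixed finite-dimensional polynomial space (constants in the $\polP_1$ case, polynomials in $\polQ_1$ that are independent of $x_i$ in the $\polQ_1$ case). Transferring to a reference element via the isoparametric map and using quasiuniformity, one obtains the inverse inequality
\[
  \|\partial_i \Pi_h w\|_{L^\infty(T)} \lesssim \dashint_T |\partial_i \Pi_h w| \diff{x},
\]
with a constant depending only on the reference element. Combining this with the monotonicity of $\varphi_{i,a}$ and the uniform $\Delta_2$ property yields
\[
  \dashint_T \varphi_{i,a}(|\partial_i \Pi_h w|) \diff{x}
  \leq \varphi_{i,a}\bigl(\|\partial_i \Pi_h w\|_{L^\infty(T)}\bigr)
  \lesssim \varphi_{i,a}\!\left(\dashint_T |\partial_i \Pi_h w| \diff{x}\right).
\]

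Next, I would invoke the orthotropic $L^1$ stability \eqref{eq:BNS} and again exploit the monotonicity and the uniform $\Delta_2$ bound to absorb the multiplicative constant:
\[
  \varphi_{i,a}\!\left(\dashint_T |\partial_i \Pi_h w| \diff{x}\right)
  \lesssim \varphi_{i,a}\!\left(\dashint_{\calN_T} |\partial_i w| \diff{x}\right).
\]
Finally, since $\varphi_{i,a}$ is convex, Jensen's inequality over $\calN_T$ gives
\[
  \varphi_{i,a}\!\left(\dashint_{\calN_T} |\partial_i w| \diff{x}\right) \leq \dashint_{\calN_T} \varphi_{i,a}(|\partial_i w|) \diff{x},
\]
and concatenating the three displays delivers the stated inequality.

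The only real point requiring care is that every constant must be independent of $a\geq 0$. This is precisely what $\sup_{a\geq 0}\Delta_2(\varphi_{i,a}) <\infty$ (valid under Assumption~\ref{assume:growth}, which in particular implies $\varphi_i^*\in\Delta_2$ via \eqref{eq:GrowthNfunc}) is designed to guarantee. The polynomial inverse inequality is of the usual shape-regular finite-element variety, so its constant depends only on the reference element and the quasiuniformity parameter, and in particular neither on $a$ nor on $h$.
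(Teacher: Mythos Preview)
Your proof is correct and follows essentially the same route as the paper's: both combine the polynomial inverse estimate $\|\partial_i\Pi_h w\|_{L^\infty(T)}\lesssim \dashint_T|\partial_i\Pi_h w|\diff{x}$, the $L^1$ orthotropic stability \eqref{eq:BNS} pushed through $\varphi_{i,a}$ via the uniform $\Delta_2$ bound, and Jensen's inequality on the patch. The paper merely presents the second and third steps first and the inverse estimate last, but the argument is identical in substance.
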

\begin{proof}
We begin by observing that, since $\varphi_{i,a} \in \Delta_2$ is increasing, from \eqref{eq:BNS} and Young's inequality we obtain
\[
  \varphi_{i,a}\left( \dashint_T|\partial_i \Pi_h w| \diff{x} \right) \lesssim
  \varphi_{i,a}\left( \dashint_{\calN_T}|\partial_i w| \diff{x} \right) \leq
  \dashint_{\calN_T}\varphi_{i,a}\left(|\partial_i w| \right) \diff{x}.
\]

Next we use that, since $(\Pi_h w)_{|T} \in \calP$, then
\[
  |\partial_i \Pi_h w(x) | \lesssim \dashint_T |\partial_i \Pi_h w | \diff{z}, \quad \forall x \in T.
\]
Therefore,
\[
  \dashint_T \varphi_{i,a}(|\partial_i \Pi_h w|) \diff{x} \lesssim
  \dashint_T \varphi_{i,a}\left(  \dashint_T |\partial_i \Pi_h w | \diff{z} \right) \diff{x}
  = \varphi_{i,a}\left(  \dashint_T |\partial_i \Pi_h w | \diff{z} \right).
\]

Combining the previous estimates, stability follows.
\end{proof}

From the anisotropic Orlicz stability and local invariance of $\Pi_h$ it follows that it has optimal approximation properties.

\begin{prop}[interpolation]
\label{prop:interpolate}
Let $w \in W^{1,\bvarphi}_0(\Omega)$. If $\bfV(\GRAD w) \in \bfW^{1,2}_0(\Omega)$, then we have
\[
  \| \bfV(\GRAD w) - \bfV(\GRAD \Pi_h w) \|_\Ldeuxd \lesssim h \| \GRAD \bfV(\GRAD w) \|_\Ldeuxd,
\]
where the implied constant depends only on the quasiuniformity of $\Triang$.
\end{prop}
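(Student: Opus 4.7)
The plan is to work element by element and combine three ingredients: the orthotropic Orlicz stability of Lemma~\ref{lem:OrliczStability}, the local polynomial invariance of $\Pi_h$ from Assumption~\ref{assume:interpolant}, and a Poincar\'e--Wirtinger inequality on each patch $\calN_T$ applied to $\bfV(\GRAD w) \in \bfW^{1,2}$. The global estimate is then reassembled using the finite-overlap of patches provided by quasi-uniformity.

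\medskip

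\textbf{Local reduction and splitting.} Fix $T \in \Triang$ and, for each $i \in \{1,\dots,d\}$, define the scalar $\bar\xi_i \in \Real$ by the identity
\[
  V_i(\bar\xi_i) = \dashint_{\calN_T} V_i(\partial_i w) \diff{x},
\]
which is unambiguous because, under Assumption~\ref{assume:growth}, each $V_i : \Real \to \Real$ is a continuous odd bijection. The function $q_T(x) := \sum_{i=1}^d \bar\xi_i x_i$ is globally affine, so $q_T|_{\calN_T}$ lies in $\calP(T)$, and the local-invariance part of Assumption~\ref{assume:interpolant}, together with the linearity of $\Pi_h$, gives $\Pi_h q_T|_T = q_T|_T$ and hence $\partial_i \Pi_h w - \bar\xi_i = \partial_i \Pi_h(w - q_T)$ on $T$. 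Proposition~\ref{prop:UsefulEquivalence}, applied coordinate-wise, together with the triangle inequality, yields
\[
  \int_T |\bfV(\GRAD w) - \bfV(\GRAD \Pi_h w)|^2 \diff{x} \lesssim \sum_{i=1}^d \bigl( \mathrm{I}_i + \mathrm{II}_i \bigr),
\]
with $\mathrm{I}_i := \int_T |V_i(\partial_i w) - V_i(\bar\xi_i)|^2 \diff{x}$ and $\mathrm{II}_i := \int_T |V_i(\bar\xi_i) - V_i(\partial_i \Pi_h w)|^2 \diff{x}$. Since $V_i(\bar\xi_i)$ is the mean of $V_i(\partial_i w)$ on $\calN_T$, a Poincar\'e--Wirtinger inequality on $\calN_T$ (whose diameter is $\eqsim h$ by quasi-uniformity) controls $\mathrm{I}_i \lesssim h^2 \int_{\calN_T} |\GRAD V_i(\partial_i w)|^2 \diff{x}$. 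For $\mathrm{II}_i$, Proposition~\ref{prop:UsefulEquivalence} in the form $|V_i(\bar\xi_i) - V_i(\partial_i \Pi_h w)|^2 \eqsim \varphi_{i,|\bar\xi_i|}(|\partial_i \Pi_h(w - q_T)|)$ together with Lemma~\ref{lem:OrliczStability} with the $T$-constant shift $a = |\bar\xi_i|$ applied to $w - q_T$ gives
\[
  \dashint_T \varphi_{i,|\bar\xi_i|}(|\partial_i \Pi_h(w-q_T)|) \diff{x} \lesssim \dashint_{\calN_T} \varphi_{i,|\bar\xi_i|}(|\partial_i w - \bar\xi_i|) \diff{x} \eqsim \dashint_{\calN_T} |V_i(\partial_i w) - V_i(\bar\xi_i)|^2 \diff{x},
\]
and the same Poincar\'e--Wirtinger step, combined with $|T| \eqsim |\calN_T|$, controls this by $h^2 \int_{\calN_T} |\GRAD V_i(\partial_i w)|^2 \diff{x}$. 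Summing the two contributions over $i$ and over $T$ and invoking the finite overlap of patches yields the claim after taking square roots.

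\medskip

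\textbf{Main obstacle.} The delicate point is aligning the shift inside Lemma~\ref{lem:OrliczStability} with the Wirtinger mean inside Poincar\'e. The coordinate-wise growth imbalance of $\{\varphi_i\}$ rules out the isotropic shortcut of subtracting a single vector-valued constant and working with one $N$-function. Defining $V_i(\bar\xi_i)$---rather than $\bar\xi_i$---to be the patch mean is precisely what makes the shifted-$N$-functional identity of Proposition~\ref{prop:UsefulEquivalence} convert the right-hand side of the stability inequality back into a $V_i$-deviation that Poincar\'e can absorb with matching constants.
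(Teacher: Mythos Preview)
Your argument matches the paper's for \emph{interior} elements (those with $\calN_T \cap \partial\Omega = \emptyset$), but it has a genuine gap at the boundary. Both the local invariance in Assumption~\ref{assume:interpolant} and the stability in Lemma~\ref{lem:OrliczStability} are stated only for functions in $W^{1,1}_0(\Omega)$. Your affine comparison function $q_T(x)=\sum_i \bar\xi_i x_i$ does not vanish on $\partial\Omega$, so neither $\Pi_h q_T$ nor $\Pi_h(w-q_T)$ is defined, and the identity $\partial_i \Pi_h w - \bar\xi_i = \partial_i \Pi_h(w-q_T)$ on $T$ is unjustified. For interior $T$ this is harmless: one can multiply $q_T$ by a cutoff to obtain $\tilde q_T \in W^{1,1}_0(\Omega)$ with $\tilde q_T|_{\calN_T}=q_T|_{\calN_T}$, and then the invariance and Lemma~\ref{lem:OrliczStability} apply to $w-\tilde q_T$. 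When $\calN_T$ meets $\partial\Omega$, however, no such extension exists, because $q_T$ does not vanish on $\calN_T\cap\partial\Omega$.

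The paper treats boundary elements separately, exploiting the geometry of $\Omega=(0,1)^d$: the outer normal on $\Gamma=\calN_T\cap\partial\Omega$ is some coordinate direction $\bfe_{\frakj}$. For tangential indices $i\neq\frakj$ one has $V_i(\partial_i w)=0$ on $\Gamma$, so a Friedrichs-type Poincar\'e inequality (zero boundary values, not Wirtinger) gives the $h$ factor directly, without subtracting any constant. For the normal index $i=\frakj$ the paper uses the special polynomial $p=q\,x_{\frakj}$ (or $q(1-x_{\frakj})$), which \emph{does} vanish on $\Gamma$ and can therefore be pushed into $W^{1,1}_0(\Omega)$; invariance and Lemma~\ref{lem:OrliczStability} then apply as in the interior case. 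Your proof needs an argument of this kind to close.
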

\begin{proof}
After localization of the $\bfL^2$--norm, we  realize that we need to consider two cases, depending on whether an element is interior or close to the boundary.

First, if $T \in \Triang$ is an interior element, \ie $\calN_T \cap \partial\Omega = \emptyset$, then we see that it is sufficient to prove that
\begin{equation}
\label{eq:Crucial}
\begin{aligned}
  \dashint_T |\bfV(\GRAD w) - \bfV(\GRAD \Pi_h w) |^2 \diff{x} &\lesssim \inf_{\bfq \in \Real^d} \dashint_{\calN_T} |\bfV(\GRAD w) - \bfV(\bfq) |^2 \diff{x} \\
  &\lesssim h^2 \dashint_{\calN_T}  |\GRAD \bfV(\GRAD w) |^2 \diff{x}.
\end{aligned}
\end{equation}
The second inequality follows upon noticing that $\bfV$ is surjective, so that
\[
  \inf_{\bfq \in \Real^d} \dashint_{\calN_T} |\bfV(\GRAD w) - \bfV(\bfq) |^2 \diff{x} = \inf_{\bfq \in \Real^d} \dashint_{\calN_T} |\bfV(\GRAD w) - \bfq |^2 \diff{x}
\]
and applying the Poincar\'e inequality.

It remains then to prove the first inequality in \eqref{eq:Crucial}. To achieve this, for $p \in \calP(T)$, we estimate
\begin{align*}
  \dashint_T |\bfV(\GRAD w) - \bfV(\GRAD \Pi_h w) |^2 \diff{x} &\lesssim \dashint_T |\bfV(\GRAD w) - \bfV(\GRAD p) |^2 \diff{x} \\
  &+ \dashint_T |\bfV(\GRAD p) - \bfV(\GRAD \Pi_h w) |^2 \diff{x}
  = \mathrm{I} + \mathrm{II}.
\end{align*}
Next, we see that for every $\bfq \in \Real^d$ we can find $p \in \calP(T) \subset \polP_1$ for which $\GRAD p = \bfq$, thus by shape regularity
\[
  \mathrm{I} \lesssim \dashint_{\calN_T} |\bfV(\GRAD u) - \bfV(\bfq) |^2 \diff{x}.
\]
Next, we use Proposition~\ref{prop:UsefulEquivalence}, the invariance property of $\Pi_h$, and Lemma~\ref{lem:OrliczStability} to write
\begin{align*}
  \mathrm{II} &\lesssim \sum_{i=1}^d \dashint_T \varphi_{i,|\bfq_i|}(|\partial_i (\Pi_h u - p) |) \diff{x} =
  \sum_{i=1}^d \dashint_T \varphi_{i,|\bfq_i|}(|\partial_i \Pi_h( u - p) |) \diff{x} \\
  &\lesssim \sum_{i=1}^d \dashint_{\calN_T} \varphi_{i,|\bfq_i|}(|\partial_i ( u - p )|) \diff{x}
  = \sum_{i=1}^d \dashint_{\calN_T} \varphi_{i,|\bfq_i|}(|\partial_i u - \bfq_i|) \diff{x} \\
  &\lesssim \dashint_{\calN_T} |\bfV(\GRAD u) - \bfV(\bfq) |^2 \diff{x},
\end{align*}
where in the last step we applied Proposition~\ref{prop:UsefulEquivalence}.

It remains then to deal with boundary elements, \ie those whose patch touches the boundary. For such elements we exploit the geometry of the domain. We do so by noticing that if $T \in \Triang$ verifies $\Gamma = \calN_T \cap \partial \Omega \neq \emptyset$, then this set must have, for some $\frakj \in \{1, \ldots, d \}$, as normal $\bfe_\frakj$, with $\{\bfe_j\}_{j=1}^d$ being the canonical basis of $\Real^d$. We consider then two cases:
  \begin{itemize}[leftmargin=0.5cm]
    \item ($i\neq \frakj$) Since $w$ may be assumed smooth and $w=0$ on $\Gamma$, we must have $\partial_i w =0$ and $V_i(\partial_i w) = 0$ on $\Gamma$ as well. Thus, the Poincar\'e inequality implies that
    \[
      \int_T |V_i(\partial_i w)|^2 \diff{x} \lesssim h^2 \int_{\calN_T} |\GRAD V_i(\partial_i w)|^2 \diff{x},
    \]
    where we used shape regularity. Next, owing to Lemma~\ref{lem:OrliczStability} and Proposition~\ref{prop:UsefulEquivalence},
    \[
      \int_T | V_i(\partial_i \Pi_h w) |^2 \diff{x} \lesssim \int_T \varphi_i(|\partial_i \Pi_h w|) \diff{x} \lesssim \int_{\calN_T} \varphi_i(|\partial_i w|) \diff{x} \lesssim \int_{\calN_T} | V_i(\partial_i  w) |^2 \diff{x}.
    \]
    Gathering these estimates we see that
    \[
      \int_T \left| V_i(\partial_iw) - V_i(\partial_i\Pi_h w) \right|^2 \diff{x} \lesssim \int_{\calN_T} | V_i(\partial_i w) |^2 \diff{x} \lesssim h^2 \int_{\calN_T} | \GRAD V_i(\partial_i w) |^2 \diff{x}.
    \]

    \item ($i=\frakj$) Let, either, $p = qx_\frakj \in \calP(T)$ or $p = q (1-x_\frakj) \in \calP(T)$, with $q \in \Real$ arbitrary. By symmetry we only consider the first case. We begin by writing
    \begin{align*}
      \int_T \left| V_\frakj(\partial_\frakj w) - V_\frakj (\partial_\frakj  \Pi_h w) \right|^2 \diff{x} &\lesssim \int_T \left| V_\frakj (\partial_\frakj w) - V_\frakj (\partial_\frakj  p) \right|^2 \diff{x} \\
      &+ \int_T \left| V_\frakj (\partial_\frakj p) - V_\frakj (\partial_\frakj  \Pi_h w) \right|^2 \diff{x}
      = \frakA + \frakB,
    \end{align*}
    and estimate each term separately.
    First, since $\partial_\frakj p = q$ is arbitrary, it can be chosen to obtain
    \begin{align*}
      \frakA &= \int_T \left| V_\frakj (\partial_\frakj w) - V_\frakj (\partial_\frakj p) \right|^2 \diff{x} = \int_T \left| V_\frakj (\partial_\frakj w) - V_\frakj (q) \right|^2 \diff{x} \\
      &= \int_T \left| V_\frakj (\partial_\frakj w) - Q \right|^2 \diff{x}
      \lesssim h^2 \int_T \left| \GRAD V_\frakj (\partial_\frakj w) \right|^2 \diff{x},
    \end{align*}
    where we also used that $V_\frakj $ is surjective. With such a chosen $q$ we continue and treat the term $\frakB$.

    Since, by construction, $p_{|\Gamma}= 0$ we have that $(\Pi_h p)_{|T}=p$. We can now use Proposition~\ref{prop:UsefulEquivalence} to get
    \begin{align*}
      \frakB &\lesssim \int_T \varphi_{\frakj ,|q|}(|\partial_\frakj  \Pi_h w - q|) \diff{x} =
      \int_T \varphi_{\frakj ,|q|}(|\partial_\frakj \Pi_h w - \partial_\frakj  p|) \diff{x} \\
      &= \int_T \varphi_{\frakj ,|q|}(|\partial_\frakj \Pi_h (w - p)|) \diff{x}
      \lesssim \int_T \varphi_{\frakj ,|q|}(|\partial_\frakj  (w - p)|) \diff{x} \\
      &\lesssim \int_T |V_\frakj (\partial_\frakj  w) - V_\frakj (\partial_\frakj  p) |^2 \diff{x} = \frakA.
    \end{align*}
  \end{itemize}

Adding over $T \in \Triang$, and using shape regularity, the claimed interpolation estimate follows.
\end{proof}

With the aid of these interpolation estimates, let us now obtain error estimates.

\begin{thm}[rate of convergence]
\label{thm:Rates}
Let $u \in W^{1,\bvarphi}_0(\Omega)$ be the weak solution to \eqref{eq:TheProblem} and $u_h \in \Fespace$ solve \eqref{eq:TheApprox}. If Assumption~\ref{assume:regularity} holds and $\Fespace$ verifies Assumption~\ref{assume:interpolant}, then we have the optimal error estimate
\begin{equation}
\label{eq:Rate}
  \| \bfV(\GRAD u) - \bfV(\GRAD u_h) \|_\Ldeuxd \lesssim h \| \GRAD \bfV(\GRAD u) \|_\Ldeuxd.
\end{equation}
\end{thm}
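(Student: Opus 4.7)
The plan is to assemble the error estimate from the two main ingredients already established: the quasi-best approximation property from Proposition~\ref{prop:Cea} and the interpolation estimate from Proposition~\ref{prop:interpolate}. The hypotheses are tailored for exactly this chaining, so essentially no new analytic work should be required beyond selecting the right test function.

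First, I would invoke Proposition~\ref{prop:Cea}, which bounds $\| \bfV(\GRAD u) - \bfV(\GRAD u_h) \|_\Ldeuxd$ by the infimum over the discrete space of $\| \bfV(\GRAD u) - \bfV(\GRAD v_h) \|_\Ldeuxd$. The use of this proposition is clean and requires only that $u_h$ be the Galerkin solution, which is given by assumption. Note that this step already absorbs the nonlinearity: after this inequality, we no longer need to worry about $\bfA$, Galerkin orthogonality, the shifted Young inequality, or the pointwise monotonicity of the flux. The remaining task is purely approximation-theoretic, phrased in terms of the natural quasi-norm $|\bfV(\GRAD \cdot)|$.

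Second, I would specialize the infimum by choosing $v_h = \Pi_h u$, where $\Pi_h$ is the interpolation operator whose existence is postulated in Assumption~\ref{assume:interpolant}. Under Assumption~\ref{assume:regularity} we have $\bfV(\GRAD u) \in \bfW^{1,2}(\Omega)$, so the hypothesis of Proposition~\ref{prop:interpolate} is met with $w = u$. Applying that proposition then yields
\[
  \| \bfV(\GRAD u) - \bfV(\GRAD \Pi_h u) \|_\Ldeuxd \lesssim h \| \GRAD \bfV(\GRAD u) \|_\Ldeuxd.
\]
Combining the two displayed bounds gives \eqref{eq:Rate}.

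I do not expect any genuine obstacle here: all difficulties (the orthotropic degeneracy, the shifted \Nfunc{} machinery, the local invariance of $\Pi_h$, and the boundary-element treatment) have already been discharged in Propositions~\ref{prop:Cea} and~\ref{prop:interpolate}. The only point worth a brief sanity check is that $\Pi_h u$ is indeed an admissible test function, i.e.\ $\Pi_h u \in \Fespace$; this is built into the definition of $\Pi_h$ and the assumption that $u \in W^{1,\bvarphi}_0(\Omega) \subset W^{1,1}_0(\Omega)$, so the composition and the best-approximation selection are consistent.
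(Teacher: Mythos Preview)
Your proposal is correct and matches the paper's own proof essentially verbatim: the paper simply notes that Assumption~\ref{assume:regularity} makes Proposition~\ref{prop:interpolate} applicable and then combines it with Proposition~\ref{prop:Cea}. Your additional remarks about the admissibility of $\Pi_h u$ as a test function are fine but not needed beyond what the paper states.
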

\begin{proof}
By assumption $\GRAD \bfV(\GRAD u) \in \Ldeuxd$. Therefore, the interpolation estimates of Proposition~\ref{prop:interpolate} hold. We merely then need to combine this with Proposition~\ref{prop:Cea}.
\end{proof}

\section{Suitable interpolation operators}
\label{sec:interpolant}

The purpose of this section is to explore the possibility of constructing interpolants that verify Assumption~\ref{assume:interpolant}. As stated before, problem \eqref{eq:TheProblem} itself is strongly dependent on the chosen coordinate system. Thus, we shall continue to assume that $\Omega = (0,1)^d$.

In order to describe the types of meshes that we allow we introduce some notation. We let $N \in \polN$ and set the mesh size to be $h = \tfrac1N$. Define $\bfN = [0,N]^d \cap \polN_0^d$ and $\bfN^0 = \bfN \cap \Omega$. The nodes of the mesh are
\[
  \vertex(\bfk) = h\bfk, \quad \bfk \in \bfN.
\]
The interior nodes are those indexed by $\bfN^0$. The two types of meshes that will suit our purposes, and the corresponding finite element spaces, are given next.

In the first case, we let $\Triang = \calQ_h = \{ Q_\bfk \}$ where, for $\bfk \in \bfN$ such that $\bfk + \bfe_i \in \bfN$ for all $i \in \{1, \ldots, d\}$, the elements are cubes $Q_\bfk$:
\[
  Q_{\bfk} = \left\{ x \in \bar\Omega : h \bfk\cdot \bfe_i \leq x_i \leq h(\bfk\cdot\bfe_i+1), \ \bfk \in \bfN, \ i \in \{ 0,\ldots, d\} \right\}.
\]
The ensuing finite element space is denoted by $\Fespace^\polQ$. For $\bfk \in \bfN^0$, by $\Lambda_{h,\bfk}^\polQ \in \polQ_1$ we denote the Lagrange basis corresponding to the interior node $\vertex(\bfk)$. Given the special structure of the mesh, if $\Lambda_{h,\bfk}^\polQ$ is the Lagrange basis function associated to node $\vertex(\bfk)$, then we have
\[
  \Lambda_{h,\bfk}^\polQ(x) = \prod_{i=1}^d \lambda_{k_i}(x_i) ,
\]
where $\lambda_{k_m}$ is the one dimensional, piecewise linear, Lagrange basis function associated to the node $k_mh$ over a mesh of size $h$.

The second case corresponds to a simplicial mesh. We set $\Triang = \calT_h = \{ S \}$, where each $S \in \calT_h$ is a simplex. These simplices are obtained by taking every $Q_\bfk \in \calQ_h$ and subdividing it by a Kuhn partition of $d!$ Kuhn simplices. The orientation of the diagonal does not matter, so in two dimensions we can have all diagonals parallel or a -cross pattern. In this case, the finite element space is denoted by $\Fespace^\polP$ and the Lagrange basis functions are $\{ \Lambda_{h,\bfk}^\polP \}_{\bfk \in \bfN^0} \subset \VTh$.

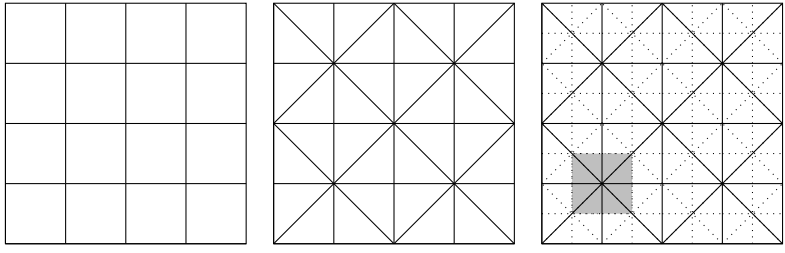
\begin{figure}
  \centering
  
  \begin{tikzpicture}[scale=0.8]
    \draw (0,0) rectangle (4,4);

    \foreach \i in {0,1,2,3}{
      \foreach \j in {0,1,2,3}{
        \begin{scope}[shift={(\i,\j)}]
          \draw (0,0) -- (0,1);
          \draw (0,0) -- (1,0);
        \end{scope}
      }
    }
  \end{tikzpicture}%
  \quad
  \begin{tikzpicture}[scale=0.8]
    \draw (0,0) rectangle (4,4);

    \tikzmath{int \s1; int \s2; \s1=1; \s2=1;}
    \foreach \i in {0,1,2,3}{
      \foreach \j in {0,1,2,3}{
        \begin{scope}[shift={(\i,\j)}]
          \tikzmath{\ijsum = \i+\j;}
          \draw (0,0) -- (0,1);
          \draw (0,0) -- (1,0);
          \draw (0,{Mod(\i+\j,2)}) -- (1, {Mod(\i+\j+1,2});
        \end{scope}
      }
    }

  \end{tikzpicture}%
  \quad
  \begin{tikzpicture}[scale=0.8]
    \draw (0,0) rectangle (4,4);
    
    \draw[dotted, stroke = gray!50, fill = gray!50, very thin] (0.5,0.5) rectangle (1.5, 1.5);
    
    \foreach \i in {0,1,2,3}{
      \foreach \j in {0,1,2,3}{
        \begin{scope}[shift={(\i,\j)}]
          \tikzmath{\ijsum = \i+\j;}
          \draw (0,0) -- (0,1);
          \draw (0,0) -- (1,0);
          \draw (0,{Mod(\i+\j,2)}) -- (1, {Mod(\i+\j+1,2});
        \end{scope}
      }
    }
    \foreach \i in {0,1,2,3}{
      \foreach \j in {0,1,2,3}{
        \begin{scope}[shift={(\i,\j)}]
          \draw[dotted] (0,1/2) -- (1,1/2);
          \draw[dotted] (1/2,0) -- (1/2,1);
          \draw[dotted] (1,0) -- (0,1);
          \draw[dotted] (0,0) -- (1,1);
        \end{scope}
      }
    }
  \end{tikzpicture}

  \caption{The meshes $\mathcal{Q}_h$, $\mathcal{T}_h$, and $\mathcal{T}_{h/2}$; and a sample cell $\sigma_\bfk$.}
  \label{fig:meshes}
\end{figure}

The types of admissible meshes are depicted in Figure~\ref{fig:meshes}.

\subsection{A positivity preserving interpolant}
\label{sub:BNS}
Here we, essentially, show that the interpolant constructed in \cite{MR3378839} is orthotropically stable, \ie it satisfies \eqref{eq:BNS}. We begin by recalling its construction. Given $w \in L^1(\Omega)$ we define
\[
  W_{\bfk} = \dashint_{Q^\circ} w( x + \vertex(\bfk) ) \diff{x}, \qquad Q^\circ = \left( -\frac{h}2, \frac{h}2 \right)^d, \qquad \bfk\in \bfN^0.
\]

\subsubsection{Quadrilateral meshes}
The interpolant in this case is
\begin{equation}
\label{eq:QuadInterpolant}
  \Pi_h^{\polQ,+} w(x) = \sum_{\bfk \in \bfN^0} W_{\bfk} \Lambda_{h,\bfk}^\polQ(x).
\end{equation}
Notice that, by construction, the interpolant preserves homogeneous boundary conditions. Additional properties are given in the following result.

\begin{prop}[interpolant]
\label{prop:QuadStable}
The interpolant defined in \eqref{eq:QuadInterpolant} is positivity preserving and stable in every $L^p(\Omega)$, $p \in [1,\infty]$. In addition, it satisfies Assumption~\ref{assume:interpolant} with an implicit constant of one, \ie for every $w \in W^{1,1}_0(\Omega)$ and all $Q_\bfk \in \calQ_h$, we have
\[
  \dashint_{Q_\bfk} | \partial_i \Pi_h^{\polQ,+} w | \diff{x} \leq \dashint_{\calN_{Q_\bfk}} |\partial_i w | \diff{x}.
\]
\end{prop}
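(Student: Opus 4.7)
I would split the proof into three independent parts, all exploiting the tensor product factorization $\Lambda_{h,\bfk}^\polQ(x) = \prod_{m=1}^d \lambda_{k_m}(x_m)$ of the Lagrange basis.

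\emph{Positivity preservation} is immediate: each one-dimensional hat $\lambda_{k_m}$ is nonnegative, so $\Lambda_{h,\bfk}^\polQ \geq 0$, and if $w \geq 0$ then each $W_\bfk = \dashint_{Q^\circ} w(\cdot + \vertex(\bfk))\diff x$ is nonnegative. For \emph{$L^p$ stability}, the $p = \infty$ case follows from $|W_\bfk| \leq \|w\|_{L^\infty}$ combined with $\sum_\bfk \Lambda_{h,\bfk}^\polQ \leq 1$. For $p \in [1, \infty)$, Jensen's inequality yields $|W_\bfk|^p \leq \dashint_{Q^\circ + \vertex(\bfk)}|w|^p$; on any element $T = Q_\bfk$ only the $2^d$ corner-node basis functions contribute and their averaging cubes all lie in $\calN_T$, so convexity plus finite overlap give $\int_T |\Pi_h^{\polQ,+}w|^p \leq \int_{\calN_T}|w|^p$.

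The heart of the statement is the \emph{orthotropic stability}. On the element $Q_\bfk$, using $\lambda_{k_i}'|_{Q_\bfk} = -1/h$ and $\lambda_{k_i+1}'|_{Q_\bfk} = 1/h$ and grouping the contributing nodes by the perpendicular multi-index $\bfeta' = (\eta_m)_{m\neq i} \in \{0,1\}^{d-1}$ with $\sigma(\bfeta') = \sum_{m \neq i} \eta_m \bfe_m$, I would arrive at the expansion
\[
\partial_i \Pi_h^{\polQ,+} w(x) = \sum_{\bfeta' \in \{0,1\}^{d-1}} \frac{W_{\bfk + \bfe_i + \sigma(\bfeta')} - W_{\bfk + \sigma(\bfeta')}}{h}\; \prod_{m \neq i} \lambda_{k_m + \eta_m}(x_m).
\]
The perpendicular hat products are nonnegative and form a pointwise partition of unity on $Q_\bfk$, so taking absolute values, integrating, and normalizing by $|Q_\bfk|$ bound $\dashint_{Q_\bfk}|\partial_i \Pi_h^{\polQ,+}w|\diff x$ by a convex combination (with weights $1/2^{d-1}$) of the moduli of the difference quotients $|W_{\bfk + \bfe_i + \sigma(\bfeta')} - W_{\bfk + \sigma(\bfeta')}|/h$.

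To control these difference quotients, I would invoke the fundamental theorem of calculus and Fubini:
\[
\frac{W_{\bfk + \bfe_i + \sigma(\bfeta')} - W_{\bfk + \sigma(\bfeta')}}{h} = \dashint_{Q^\circ}\dashint_0^h \partial_i w\bigl(x + \vertex(\bfk + \sigma(\bfeta')) + s\bfe_i\bigr)\diff s \diff x,
\]
which, after the change of variables $y = x + \vertex(\bfk + \sigma(\bfeta')) + s\bfe_i$, exhibits each quotient as a unit-mass weighted average of $\partial_i w$ over a $d$-dimensional rectangle of side $h$ in the directions $m \neq i$ and side $2h$ in direction $i$. As $\bfeta'$ ranges over $\{0,1\}^{d-1}$ the perpendicular slices of these rectangles are pairwise disjoint, so the rectangles themselves are disjoint and together tile the axis-aligned cube $\prod_m ((k_m - 1/2)h, (k_m + 3/2)h) \subset \calN_{Q_\bfk}$. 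The main obstacle is the careful book-keeping required to combine the tent-shaped weight in direction $i$ produced by Fubini with the disjointness of the rectangles in the perpendicular directions and the convex-combination factor $1/2^{d-1}$, so that the resulting estimate collapses cleanly into the bound by $\dashint_{\calN_{Q_\bfk}}|\partial_i w|$ with the sharp constant asserted in the statement.
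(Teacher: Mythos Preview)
Your treatment of positivity, $L^p$-stability, and the orthotropic gradient bound \eqref{eq:BNS} matches the paper's argument, written in general dimension rather than for $d=2$: the same expansion of $\partial_i \Pi_h^{\polQ,+} w$ as a convex combination (weights $2^{1-d}$) of nodal finite differences, and the same fundamental-theorem-of-calculus representation of each difference as a unit-mass average of $\partial_i w$. (Regarding the ``sharp constant'' you flag as the main obstacle: with $\calN_{Q_\bfk}$ the full $3^d$-block of neighbouring cubes, the book-keeping actually delivers $3^d/2^{d-1}$ rather than $1$; the paper's own final display in its step~1 has the same slip, so this is not a defect peculiar to your route.)

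There is, however, a genuine omission. Assumption~\ref{assume:interpolant} has \emph{two} items and you address only the first. The second is \emph{local invariance}: if $w_{|\calN_T}$ coincides with a single $\polQ_1$ polynomial then $(\Pi_h^{\polQ,+} w)_{|T} = w_{|T}$. This is not automatic --- $\Pi_h^{\polQ,+}$ is not a projection onto $\Fespace^\polQ$, as the paper itself remarks at the opening of \S\ref{sub:suitable-projection} --- and it is precisely what makes the approximation argument of Proposition~\ref{prop:interpolate} go through. The paper devotes a separate three-case analysis to it: for interior cells the symmetry of $Q^\circ$ about the origin gives $W_\bfk = w(\vertex(\bfk))$ whenever $w$ is a single $\polQ_1$ polynomial on $Q^\circ+\vertex(\bfk)$; for cells whose patch touches $\partial\Omega$ but which do not themselves meet it, the averaging cubes stay in the interior and the same reasoning applies; and for cells meeting $\partial\Omega$ one must verify by hand that polynomials of the form $q\,x_\frakj$ (or $q(1-x_\frakj)$), which vanish on the relevant face, are reproduced even though the boundary nodes are dropped from the sum \eqref{eq:QuadInterpolant}. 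Your outline contains none of this, so as written it does not establish that Assumption~\ref{assume:interpolant} is satisfied.
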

\begin{proof}
We begin by noticing that the positivity preserving property, and the stability for $p \in \{1,\infty\}$ immediately follow from the fact that we are taking local averages and $0 \leq \Lambda_{h,\bfk}^\polQ \leq 1$. The stability for $p \in (1,\infty)$ follows by function space interpolation. We refer the reader to \cite[Lemma 3.5]{MR3378839} for details.

It remains to prove the orthotropic stability. For simplicity we present the proof in the case $d=2$. The arguments can be readily extended to the general case. We prove each item in Assumption~\ref{assume:interpolant} separately.
\begin{enumerate}[label={\arabic{*}.},leftmargin=0.5cm]
  \item We make minor modifications to the proof of \cite[Theorem 3.7]{MR3378839}. Fix $i=1$, as the other coordinate is treated similarly. Let $\bfk \in \bfN$, $Q_{\bfk} \in \calQ_h$, and $z \in Q_{\bfk}$. Notice that, owing to the Cartesian structure of the mesh, we have that $\bfk = (k,l)$ with $k,l \in \{0, \ldots, N\}$, and
  \[
    \partial_1 \Lambda_{h,(k,l)}^\polQ(z) = \lambda_k'(z_1) \lambda_l(z_2) = \pm \frac1h \lambda_l(z_2).
  \]
  In addition, since $\lambda_l(z_2) + \lambda_{l+1}(z_2) =1$,
  \[
    \partial_1\Pi_h^{\polQ,+} w(z) = \frac{ W_{(k+1,l)}-W_{(k,l)} }h \lambda_l(z_2) + \frac{ W_{(k+1,l+1)}-W_{(k,l+1)} }h (1-\lambda_l(z_2)).
  \]
  The definition of $W_{(k,l)}$ gives
  \begin{align*}
    W_{(k+1,l)}-W_{(k,l)} &= \frac1{h^2} \int_{Q^\circ} \left( w(x + \vertex(k+1,l) ) - w(x + \vertex(k,l) ) \right) \diff{x} \\ &= \frac1h \int_{Q^\circ} \int_0^1 \partial_1w( s\vertex(k+1,l) + (1-s)\vertex(k,l) + x ) \diff{s} \diff{x}.
  \end{align*}
  Finally, observe that
  \[
    \dashint_{Q_\bfk} \lambda_l(z_2) \diff{z} = \dashint_{Q_\bfk} (1-\lambda_l(z_2)) \diff{z} = \frac12.
  \]

  With these preparations we estimate
  \begin{align*}
   \dashint_{Q_\bfk} |\partial_1 \Pi_h^{\polQ,+} w|\diff{x} &\leq \frac12 \dashint_{Q^\circ} \int_0^1 \left|\partial_1w( s\vertex(k+1,l) + (1-s)\vertex(k,l) + x )\right| \diff{s} \diff{x} \\
   &+ \frac12 \dashint_{Q^\circ} \int_0^1 \left|\partial_1w( s\vertex(k+1,l+1) + (1-s)\vertex(k,l+1) + x )\right| \diff{s} \diff{x}.
  \end{align*}
  Consider now the first term,
  \begin{multline*}
    \dashint_{Q^\circ} \int_0^1 \left|\partial_1w( s\vertex(k+1,l) + (1-s)\vertex(k,l) + x )\right| \diff{s} \diff{x} \leq \\
    \frac1{h^2} \int_0^1 \int_{\left(k-\tfrac12\right)h}^{\left(k+\tfrac32\right)h} \int_{\left(l-\frac12\right)h}^{\left(l+\tfrac32\right)h} \left|\partial_1 w(x) \right| \diff{x_2} \diff{x_1} \diff{s}
    \leq \dashint_{\calN_{Q_\bfk}} \left|\partial_1 w(x) \right| \diff{x},
  \end{multline*}
  from which the result follows.

  \item We need now to prove the local invariance. This is dealt with in three separate cases.
  \begin{itemize}[leftmargin=0cm]
    \item Let $Q_{\bfk} \in \calQ_h$ be interior, \ie $\calN_{Q_\bfk} \cap \partial\Omega = \emptyset$. In this case, we observe that, owing to the symmetry in our constructions, the interior invariance property holds trivially.
    
    \item Next, if $Q_\bfk \cap \partial\Omega = \emptyset$ but $\calN_{Q_\bfk} \cap \partial\Omega \neq \emptyset$ then we see that the sets where the local averages are computed, which define $W_\bfk$, are well separated from the boundary. Consequently, if $w_{|\calN_{Q_\bfk}} \in \polQ_1$ we have $(\Pi_h^{\polQ,+} w)_{|Q_\bfk} = w_{|Q_\bfk}$.
    
    \item The remaining case, \ie $Q_\bfk \cap \partial\Omega \neq \emptyset$, was already partially dealt with in Proposition~\ref{prop:interpolate}. To see this, in the setting of Proposition~\ref{prop:interpolate}, set $d=2$ and $\frakj = 1$. This means that
    \[
      \Gamma = \calN_T \cap \partial\Omega = \{0\} \times \left[ \vertex(0,l-1) \cdot \bfe_2,  \vertex(0,l+2) \cdot \bfe_2 \right] = h \left( \{0\} \times [l-1,l+2]\right).
    \]
    Consider now the polynomial $p = q x_1 \in \polQ_1$, which vanishes on $\Gamma$. Now, if the intersection of $Q_\bfk$ with the boundary is not empty, in our established notation we must have $Q_\bfk = Q_{(0,l)}$, and then
    \begin{align*}
      (\Pi_h^{\polQ,+} p)_{|Q_{(0,l)}} &= P_{(1,l)} \Lambda_{h,(1,l)}^\polQ + P_{(1,l+1)} \Lambda_{h,(1,l+1)}^\polQ \\
        &= \lambda_1(x_1) \left[ P_{(1,l)} \lambda_2(x_2) + P_{(1,l+1)} (1-\lambda_2(x_2) )\right].
    \end{align*}
    Use now that
    \[
      \lambda_1(x_1) = \frac{x_1}h, \quad \lambda_2(x_2) = \frac{x_2 -hl}h, \quad P_{(1,l)} = P_{(1,l+1)} = qh
    \]
    to conclude that $(\Pi_h^{\polQ,+} p)_{|Q_{(0,l)}}=p$. Notice that we used that the patches used to compute $P_{(1,l)}$ and $P_{(1,l+1)}$ do not touch the boundary.

  \end{itemize}
\end{enumerate}
All properties have been shown and thus the constructed interpolant satisfies Assumption~\ref{assume:interpolant}.
\end{proof}

\subsubsection{Simplicial meshes}
\label{subsub:Triangles}

In this case the nodal values are defined as before, \ie the interpolant is defined as
\[
  \Pi_h^{\polP,+} w(x) = \sum_{\bfk \in \bfN^0} W_{\bfk} \Lambda_{h,\bfk}^\polP(x).
\]

\begin{prop}[stability]
The operator $\Pi_h^{\polP,+}$ is positivity preserving and stable in $L^p(\Omega)$ for $p \in [1,\infty]$. In addition, it satisfies Assumption~\ref{assume:interpolant} where \eqref{eq:BNS} is satisfied with constant one.
\end{prop}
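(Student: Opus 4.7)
The plan is to parallel the proof of Proposition~\ref{prop:QuadStable}, with one genuinely new ingredient. Positivity preservation and the endpoint $L^1$, $L^\infty$ stability are immediate from the fact that $\Lambda_{h,\bfk}^\polP \geq 0$, $\sum_{\bfk\in\bfN^0}\Lambda_{h,\bfk}^\polP \leq 1$ on $\Omega$, and $\int_\Omega \Lambda_{h,\bfk}^\polP \eqsim h^d$, combined with the averaging definition of $W_\bfk$; the remaining $L^p$ stability then follows by interpolation exactly as in \cite[Lemma 3.5]{MR3378839}.

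The crucial step is the orthotropic stability \eqref{eq:BNS}, and what makes it work is the axis alignment of the Kuhn subdivision. Every Kuhn simplex $S$ is attached to a permutation $\sigma$ of $\{1,\dots,d\}$ and a vertex chain $v_0,\,v_1=v_0+h\bfe_{\sigma(1)},\,\dots,\,v_d=v_{d-1}+h\bfe_{\sigma(d)}$, whose consecutive members differ by a single coordinate step of length $h$. A direct computation on this chain shows that for any $p\in\polP_1$ and any $i\in\{1,\dots,d\}$, with $k=\sigma^{-1}(i)$,
\[
  \partial_i p|_S = \frac{p(v_k)-p(v_{k-1})}{h}, \qquad v_k-v_{k-1}=h\bfe_i.
\]
Applied to $p=\Pi_h^{\polP,+} w$, this collapses $\partial_i \Pi_h^{\polP,+} w|_S$ to the single divided difference $h^{-1}(W_{v_k}-W_{v_{k-1}})$ between two nodal averages whose base points differ by exactly $h\bfe_i$. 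From here the argument is identical to Proposition~\ref{prop:QuadStable}: one writes
\[
  W_{v_k}-W_{v_{k-1}} = h\dashint_{Q^\circ}\int_0^1 \partial_i w\bigl(x+v_{k-1}+sh\bfe_i\bigr)\diff{s}\diff{x},
\]
changes variables $t=x_i+sh$, and recognizes the image domain as a rectangular box of dimensions $h\times\dots\times 2h\times\dots\times h$ contained in $\calN_S$. Averaging over $T=S$ (on which $\partial_i\Pi_h^{\polP,+} w$ is constant) then yields \eqref{eq:BNS}.

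The local invariance is organized in the same three cases as for $\polQ_1$. The common ingredient is that for any affine $q$ one has $W_\bfk = q(\vertex(\bfk))$, because $Q^\circ$ is centrally symmetric about the origin, so interior nodal averages agree with pointwise evaluation. On simplices whose patch $\calN_S$ lies in $\Omega$, and on simplices adjacent to $\partial\Omega$ but whose relevant interior nodes have their averaging patches $\vertex(\bfk)+Q^\circ$ inside $\Omega$ (which holds automatically since interior nodes lie at distance $\geq h$ from $\partial\Omega$), $\Pi_h^{\polP,+}$ therefore acts as the standard $\polP_1$ Lagrange interpolant on $S$, and the invariance on any $w|_{\calN_S}\in\polP_1$ is immediate. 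For simplices actually touching $\partial\Omega$, one takes $p=q\, x_\frakj$ vanishing on the boundary face with normal $\bfe_\frakj$ and observes that the vertex values of $\Pi_h^{\polP,+} p$ and $p$ coincide on every vertex of $S$---zero on boundary vertices, $W_\bfk = p(\vertex(\bfk))$ on interior vertices---so $\polP_1$ unisolvence on $S$ closes the case.

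I expect the orthotropic stability step to be the only real obstacle, and its success relies entirely on the Kuhn alignment. For a generic simplicial subdivision of $[0,h]^d$, $\partial_i p|_S$ would decompose into a sum of vertex differences along non-axis-aligned edges, so that $\partial_i \Pi_h^{\polP,+} w|_S$ would pick up contributions from $\partial_j w$ with $j\neq i$ and \eqref{eq:BNS} would fail; the Kuhn chain is precisely what isolates a single coordinate-aligned edge per direction and makes the univariate telescoping argument applicable componentwise.
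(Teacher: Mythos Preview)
Your proposal is correct and follows essentially the same approach as the paper. The paper's own proof is a two-line sketch that points back to Proposition~\ref{prop:QuadStable} and forward to Proposition~\ref{prop:OrthoStabPKuhnSimplices}, observing only that every Kuhn simplex has $d$ edges parallel to the coordinate axes; your write-up unpacks precisely this, making explicit the vertex-chain identity $\partial_i p|_S = h^{-1}(p(v_k)-p(v_{k-1}))$ with $v_k-v_{k-1}=h\bfe_i$, which is the same computation carried out later in the proof of Proposition~\ref{prop:OrthoStabPKuhnSimplices}.
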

\begin{proof}
The proof repeats the proof of Proposition~\ref{prop:QuadStable}. To see this, it suffices to observe that every $S \in \calT_h$ has $d$ edges that are parallel to the coordinate axes. This allows to construct the needed differences. Similar calculations are carried out in the course of the proof of Proposition~\ref{prop:OrthoStabPKuhnSimplices} below.
\end{proof}

\subsection{A projection}
\label{sub:suitable-projection}

The interpolants of Section~\ref{sub:BNS} are not projections. Here, we construct interpolants that are projections.

As an auxiliary tool we shall also need $\mathcal{T}_{h/2}$. This is the mesh that is obtained if every Kuhn simplex $S \in \mathcal{T}_h$ is fully refined, \ie we perform $d$ consecutive refinements. In the triangulation $\calT_{h/2}$, each $\vertex(\bfk)$ with $\bfk \in \bfN^0$ is surrounded by $2^d$ Kuhn cubes that consist each of $d!$ Kuhn simplices. All diagonals of these adjacent Kuhn cubes contain the point $\vertex(\bfk)$. For each $\bfk \in \bfN^0$ let $\sigma_\bfk$ denote the union of all simplices $S \in \mathcal{T}_{h/2}$ that contain the point $\vertex(\bfk)$. By construction of $\mathcal{T}_{h/2}$ each $\sigma_\bfk$ contains $2^d$ Kuhn cubes, each consisting of $d!$ Kuhn simplices. Note that the patches $\sigma_\bfk$ are translates of each other. This is important for the orthotropic stability. Refer, again, to Figure~\ref{fig:meshes} for a depiction of this construction.

%
%
%
%
%
%

Let $\calP_2(\mathcal{T}_{h/2})$ denote the set of continuous, piecewise quadratic functions on $\mathcal{T}_{h/2}$. Observe that $\Fespace^\polQ \subset \mathcal{P}_2(\mathcal{T}_{h/2})$ and $\Fespace^\polP \subset \mathcal{P}_2(\mathcal{T}_{h/2})$. 

Fix $\bfj \in \bfN^0$. Let $\calP_2(\sigma_\bfj)$ denote the restriction of $\calP_2(\calT_{h/2})$ onto $\sigma_\bfj$. Notice that $\dim(\calP_2(\sigma_\bfj)) = 5^d$. We let $\{\rho_{\bfj,k}\}_{k=1}^{5^d}$ be the Lagrange basis of $\calP_2(\sigma_\bfj)$ with the convention that $\rho_{\bfj,1}(\vertex(\bfj)) = 1$. By $\{\rho_{\bfj,k}^*\}_{k=1}^{5^d}$ denote its dual basis in the sense that
\begin{align}
  \label{eq:defdual}
  \bigskp{ \rho_{\bfj,k}}{\rho_{\bfj,m}^*}_{\sigma_\bfj} = \int_{\sigma_\bfj} \rho_{\bfj,k} \rho_{\bfj,m}^* \diff{x} = \delta_{k,m}.
\end{align}
Define $\Lambda_\bfj^* = \rho_{\bfj,1}^* \in L^\infty(\sigma_\bfj)$. We have thus constructed a family $\{\Lambda_\bfj^*\}_{\bfj \in \bfN^0}$. It turns out that this family is the dual basis for both $\{\Lambda_{h,\bfk}^\polQ\}_{\bfk \in \bfN^0}$ and $\{\Lambda_{h,\bfk}^\polP\}_{\bfk \in \bfN^0}$, as the following result shows.

\begin{lemma}[dual basis]
\label{lem:DualBasisProjection}
With the established notation we have that
\begin{align}
  \label{eq:dual-basis}
  \skp{\Lambda_{h,\bfm}^{\polQ}}{\Lambda_\bfj^*}_{\sigma_\bfj} =
  \skp{\Lambda_{h,\bfm}^{\polP}}{\Lambda_\bfj^*}_{\sigma_\bfj} = \delta_{\bfj,\bfm} \qquad \forall \bfj,\bfm \in \bfN^0.
\end{align}
where $\{\Lambda_{h,\bfk}^\polQ\}_{\bfk \in \bfN^0}$ and $\{\Lambda_{h,\bfk}^\polP\}_{\bfk \in \bfN^0}$ are, respectively, the Lagrange bases for $\Fespace^\polQ$ and $\Fespace^\polP$.
\end{lemma}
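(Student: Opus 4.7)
My proof plan rests on the observation that, once we know both finite element spaces sit inside $\calP_2(\calT_{h/2})$ when restricted to the patch $\sigma_\bfj$, the identity \eqref{eq:dual-basis} is essentially just the defining property \eqref{eq:defdual} of the dual basis expressed in Lagrange coordinates.

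First, I would verify the crucial containments
\[
  \Lambda_{h,\bfm}^{\polQ}\big|_{\sigma_\bfj} \in \calP_2(\sigma_\bfj), \qquad \Lambda_{h,\bfm}^{\polP}\big|_{\sigma_\bfj} \in \calP_2(\sigma_\bfj).
\]
For the simplicial case this is immediate: $\calT_{h/2}$ refines $\calT_h$, so on each $S \in \calT_{h/2}$ contained in $\sigma_\bfj$ the function $\Lambda_{h,\bfm}^{\polP}$ is affine, hence in $\polP_2(S)$; global continuity is inherited from $\Fespace^\polP \subset W^{1,1}_0(\Omega)$. For the cubic case, one uses that every $\polQ_1$ monomial (including the bilinear term $x_ix_j$) has total degree at most $d \le 2 \cdot $ something - actually one uses that on each Kuhn simplex of $\calT_{h/2}$ a $\polQ_1$ function, being a polynomial of total degree $\leq d$, belongs to $\polP_2$ in two dimensions; in higher dimensions one invokes directly the containment $\Fespace^\polQ \subset \calP_2(\calT_{h/2})$ noted in the text, which in turn follows from the fact that each Kuhn simplex of $\calT_{h/2}$ lies in a single $\calQ_h$-cell and that each $\polQ_1$ function factors into one-dimensional affines that are degree-two on each simplex after refinement.

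With the containments in hand, I would expand in the Lagrange basis of $\calP_2(\sigma_\bfj)$. Since $\{\rho_{\bfj,k}\}_{k=1}^{5^d}$ is the Lagrange basis with associated Lagrange nodes $\{\eta_{\bfj,k}\}_{k=1}^{5^d}$, and $\eta_{\bfj,1}=\vertex(\bfj)$ by convention, any $v \in \calP_2(\sigma_\bfj)$ satisfies $v = \sum_{k=1}^{5^d} v(\eta_{\bfj,k}) \rho_{\bfj,k}$. Applying this to $v = \Lambda_{h,\bfm}^{\polQ}|_{\sigma_\bfj}$ and pairing with $\Lambda_\bfj^* = \rho_{\bfj,1}^*$, the duality relation \eqref{eq:defdual} yields
\[
  \skp{\Lambda_{h,\bfm}^{\polQ}}{\Lambda_\bfj^*}_{\sigma_\bfj}
    = \sum_{k=1}^{5^d} \Lambda_{h,\bfm}^{\polQ}(\eta_{\bfj,k})\, \skp{\rho_{\bfj,k}}{\rho_{\bfj,1}^*}_{\sigma_\bfj}
    = \Lambda_{h,\bfm}^{\polQ}(\vertex(\bfj)) = \delta_{\bfj,\bfm},
\]
where the last equality uses the Lagrange property of the global basis of $\Fespace^\polQ$. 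The same argument, verbatim, handles $\Lambda_{h,\bfm}^{\polP}$.

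The only non-trivial point is the containment in step one, and specifically the verification that every $\polQ_1$ function restricted to a Kuhn simplex of $\calT_{h/2}$ lies in $\polP_2$ in arbitrary dimension; for $d = 2,3$ this is straightforward by a total-degree count, and the general case is the preparatory statement the authors have already made in the paragraph preceding the lemma. Once that is accepted the lemma is purely a Lagrange-versus-dual-basis computation on $\sigma_\bfj$, with no cancellation or estimation required, and in particular no appeal to the structure of the underlying variational problem.
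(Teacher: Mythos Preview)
Your proof is correct and follows essentially the same route as the paper: expand $\Lambda_{h,\bfm}^{\polQ}|_{\sigma_\bfj}$ (resp.\ $\Lambda_{h,\bfm}^{\polP}|_{\sigma_\bfj}$) in the local Lagrange basis $\{\rho_{\bfj,k}\}$ of $\calP_2(\sigma_\bfj)$, pair with $\Lambda_\bfj^* = \rho_{\bfj,1}^*$, and use~\eqref{eq:defdual} to read off the coefficient at $\vertex(\bfj)$, which is $\delta_{\bfj,\bfm}$ by the Lagrange property. The containment $\Fespace^\polQ \subset \calP_2(\calT_{h/2})$ that you spend time discussing is simply asserted by the paper in the paragraph preceding the lemma, so both arguments rest on it in the same way.
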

\begin{proof}
We begin with the case of cubes, that is $\Triang = \calQ_h$. If $\Lambda^{\polQ}_{h,\bfm|\sigma_\bfj} =0$, then both sides of \eqref{eq:dual-basis} are zero. Thus, we can assume in the following that $\Lambda^{\polQ}_{h,\bfm|\sigma_\bfj} \neq 0$. In this case, we can find coefficients $\alpha_{\bfm,k}$, $k = 1, \ldots, 5^d$ such that
\begin{align*}
  \Lambda^{\polQ}_{h,\bfm|\sigma_\bfj} = \sum_{k =1}^{5^d} \alpha_{\bfm,k} \rho_{\bfj,k}.
\end{align*}
Recall that we labeled the basis functions $\rho_{\bfj,k}$ such that $\rho_{\bfj,1}(\vertex(\bfj)) = 1$. Thus, if $\bfm=\bfj$, then $\alpha_{\bfm,1} =1$, and if $\bfm \neq \bfj$, then $\alpha_{\bfm,1} =0$. Thus, for $\bfm=\bfj$ we have
\begin{align*}
  \skp{\Lambda_{h,\bfm}^{\polQ}}{\Lambda_\bfj^*}_{\sigma_\bfj} = \sum_{k =1}^{5^d} \alpha_{\bfm,k} 
  \skp{\rho_{\bfj,k}}{\rho_{\bfj,1}^*}_{\sigma_\bfj} = \sum_{k =1}^{5^d} \alpha_{\bfm,k} \delta_{k,1}
  = \alpha_{\bfm,1} = \delta_{\bfj,\bfm}.
\end{align*}
This concludes the proof for $\mathcal{Q}_h$.

Notice now that the same proof also applies to the case of simplices, \ie for $\mathcal{T}_h$, which proves \eqref{eq:dual-basis}.
\end{proof}

With the notation of the previous proof we comment that, owing to the symmetries of $\sigma_\bfj$, it is possible to explicitly compute $\Lambda_\bfj^*$ in terms of $\{\rho_{\bfj,k}\}_{k=1}^{5^d}$, see Remark~\ref{rem:dual-basis1} below.

We define the operators $\PiQh\,:\, L^1(\Omega) \to\VQh$ and $\PiTh\,:\, L^1(\Omega) \to \VTh$ as
\begin{equation}
\label{eq:def-Pi}
  \PiQh w = \sum_{\bfj \in \bfN^0} \skp{w}{\Lambda_\bfj^*}_{\sigma_\bfj} \Lambda_{h,\bfj}^\polQ,
  \qquad 
  \PiTh w = \sum_{\bfj \in \bfN^0} \skp{w}{\Lambda_\bfj^*}_{\sigma_\bfj} \Lambda_{h,\bfj}^\polP.
\end{equation}
It follows from their definition and \eqref{eq:dual-basis} that these are linear projections.

Having defined our projections $\PiQh$ and $\PiTh$, we now study their stability. First, by symmetry of the mesh $\mathcal{T}_{h/2}$ we observe that the patches $\{\sigma_\bfj\}_{\bfj \in \bfN^0}$ and the corresponding dual basis functions $\{\Lambda_\bfj^*\}_{\bfj \in \bfN^0}$ are just translates of each other, respectively. Moreover, each patch, as a set and also as the union of simplices, is symmetric with respect to reflections along the coordinate directions with reflection point $\vertex(\bfj)$. Hence, the dual basis functions $\Lambda_\bfj^*$ are also symmetric with respect to reflection in the coordinate directions.

Using a scaling argument we see that
\begin{align*}
  \norm{\Lambda_\bfj^*}_{L^\infty(\sigma_\bfj)} &\lesssim \abs{\sigma_\bfj}^{-1} \lesssim h^{-d}.
\end{align*}
This and the definition of $\PiTh$ imply that for all $S \in \mathcal{T}_h$ we have
\begin{align}
  \label{eq:PiThlocalstab1}
  \sup_S \abs{\PiTh w} &\lesssim \dashint_{\calN_S} \abs{w} \diff{x}.
\end{align}
It follows from Jensen's inequality  that for every $p \in [1,\infty)$ and all $S \in \mathcal{T}_h$
\begin{align}
  \label{eq:PiThlocalstab2}
  \left( \dashint_S \abs{\PiTh w}^p \diff{x}  \right)^{\frac1p} &\lesssim \left( \dashint_{\calN_S} \abs{w}^p \diff{x}  \right)^{\frac1p}.
\end{align}
Note that \eqref{eq:PiThlocalstab1} and \eqref{eq:PiThlocalstab2} also hold with $\PiTh$ replaced by $\PiQh$ for all $Q \in \mathcal{Q}_h$.

Let us now turn to the proof of \eqref{eq:BNS}. In order to avoid different arguments close to the boundary $\partial \Omega$ we will first apply a simple reflection argument. Indeed, if $w \in W^{1,1}_0(\Omega)$ we can use repeated odd reflections to extend $w$ to $\bar{w} : (-1,2)^d \to \setR$. For example, if $x \in (0,1)^d$  and $\vare_1,\dots, \vare_d \in \set{-1,1}$ we set
\begin{align*}
  \bar{w}(\vare_1 x_1, \ldots, \vare_dx_d) = (-1)^{\vare_1+\cdots+\vare_d} w(x_1, \ldots, x_d).
\end{align*}
Let $\overline{\calT_h}$ denote an extension of $\mathcal{T}_h$ to $(-1,2)^d$. This allows us to define $\sigma_\bfj$ and $\Lambda_\bfj^*$ for boundary nodes $\bfj \in \bfN \setminus \bfN^0$. Since the dual basis functions  $\Lambda_\bfj^*$ are even with respect to the coordinate axes centered at $\vertex(\bfj)$ and $\bar{w}$ is extended by odd reflection, we have
\begin{align*}
  \skp{\bar{w}}{\Lambda_\bfj^*}_{\sigma_\bfj} = 0 \qquad \forall \bfj \in \bfN \setminus \bfN^0.
\end{align*}
In particular,
\begin{align*}
  \PiTh w = \sum_{\bfj \in \bfN} \skp{\bar{w}}{\Lambda_\bfj^*}_{\sigma_\bfj} \Lambda_{h,\bfj}^{\mathbb{P}}, \qquad \PiQh w = \sum_{\bfj \in \bfN} \skp{\bar{w}}{\Lambda_\bfj^*}_{\sigma_\bfj} \Lambda_{h,\bfj}^{\mathbb{Q}}.
\end{align*}
As a consequence, the orthotropic Sobolev stability near the boundary is just a special case of the interior stability for the extended objects. This certainly simplifies the calculations.

We consider first the case of simplices, \ie $\Triang = \calT_h$.

\begin{prop}[orthotropic stability of $\PiTh$]
\label{prop:OrthoStabPKuhnSimplices}
For every simplex $S \in \calT_h$ and all $k \in \{1, \ldots, d\}$, the projection $\PiTh$, defined in \eqref{eq:def-Pi}, satisfies
\begin{align}
\label{eq:local-stab1}
  \int_S \abs{\partial_k \PiTh w} \diff{x} &\lesssim \int_{\calN_S} \abs{\partial_k w}\diff{x}, \qquad \forall w \in W^{1,1}_0(\Omega).
\end{align}
\end{prop}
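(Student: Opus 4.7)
The plan is to exploit the fact that each Kuhn simplex has one edge parallel to each coordinate direction, so that $\partial_k\PiTh w$ restricted to $S$ becomes a finite difference of two dual-basis pairings. Fix $S \in \calT_h$ and $k \in \{1,\ldots,d\}$. By the Kuhn structure, select a vertex $\vertex(\bfj)$ of $S$ such that $\vertex(\bfj+\bfe_k)$ is also a vertex of $S$. Because $\PiTh w$ is affine on $S$ and the dual-basis construction together with $\Lambda_{h,\bfm}^\polP(\vertex(\bfj)) = \delta_{\bfj,\bfm}$ yields $\PiTh w(\vertex(\bfm)) = \skp{\bar w}{\Lambda_\bfm^*}_{\sigma_\bfm}$ (using the reflection extension $\bar w$ set up just before the statement), the directional derivative is the difference quotient
\[
  \partial_k \PiTh w|_S = \frac{1}{h}\Bigl(\skp{\bar w}{\Lambda_{\bfj+\bfe_k}^*}_{\sigma_{\bfj+\bfe_k}} - \skp{\bar w}{\Lambda_\bfj^*}_{\sigma_\bfj}\Bigr).
\]

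Now invoke the translation invariance of the construction, namely $\sigma_{\bfj+\bfe_k} = \sigma_\bfj + h\bfe_k$ and $\Lambda_{\bfj+\bfe_k}^*(\cdot) = \Lambda_\bfj^*(\cdot - h\bfe_k)$, to rewrite the right-hand side as a single integral against a fixed weight:
\[
  h\,\partial_k \PiTh w|_S = \int_{\sigma_\bfj}\bigl(\bar w(x+h\bfe_k) - \bar w(x)\bigr)\Lambda_\bfj^*(x)\diff x.
\]
Apply the one-dimensional fundamental theorem of calculus, $\bar w(x+h\bfe_k) - \bar w(x) = \int_0^h \partial_k \bar w(x+s\bfe_k)\diff s$, take absolute values, use $\norm{\Lambda_\bfj^*}_{L^\infty(\sigma_\bfj)} \lesssim h^{-d}$, multiply by $|S| \eqsim h^d$, and apply Fubini to arrive at
\[
  \int_S |\partial_k \PiTh w|\diff x \lesssim \int_{\sigma_\bfj + [0,h]\bfe_k} |\partial_k \bar w|\diff x.
\]

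It remains to identify the tube $\sigma_\bfj + [0,h]\bfe_k = \sigma_\bfj \cup \sigma_{\bfj+\bfe_k}$ as a subset of (possibly a reflected copy of) $\calN_S$. This is immediate: each $\sigma_\bfm$ is geometrically contained in the $\calT_h$-star of $\vertex(\bfm)$, and both $\vertex(\bfj)$ and $\vertex(\bfj+\bfe_k)$ are vertices of $S$, so every element of these stars shares a vertex with $S$ and thus lies in $\calN_S$. Near $\partial \Omega$ the tube may cross the boundary, but by the odd reflection already set up in the excerpt the pieces of $|\partial_k \bar w|$ outside $\Omega$ are reflections of pieces inside $\Omega$, so they are absorbed in $\int_{\calN_S}|\partial_k w|\diff x$ up to a bounded combinatorial factor.

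The principal difficulty is mostly bookkeeping: verifying that a suitable $\bfj$ exists for every $(S,k)$-pair (a standard feature of the Kuhn partition), and that the tube identification does not silently enlarge the patch beyond $\calN_S$. The only genuinely analytic step is the finite-difference identity in the first display, which reduces the proof to a change of variables and the scalar FTC; once that identity is in place, the remaining estimates are entirely routine and mirror the averaging argument used for the positivity-preserving operators in Proposition~\ref{prop:QuadStable}.
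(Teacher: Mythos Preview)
Your argument is correct and matches the paper's: both identify $\partial_k\PiTh w|_S$ as the finite difference $h^{-1}\bigl(\skp{\bar w}{\Lambda_{\bfj+\bfe_k}^*}-\skp{\bar w}{\Lambda_\bfj^*}\bigr)$ via the Kuhn edge structure and then transfer the derivative onto $\bar w$. The only cosmetic difference is that the paper writes the difference of dual functions as $\partial_k g$ for a compactly supported primitive and integrates by parts, whereas you change variables first and apply the scalar FTC to $\bar w(x+h\bfe_k)-\bar w(x)$; unwinding either computation produces the same kernel $-\int_0^h \Lambda_\bfj^*(\cdot - s\bfe_k)\diff s$ and the same bound.
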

\begin{proof}
Recall that every $S \in \mathcal{T}_h$. is a Kuhn simplex with $d$ edges parallel to the coordinate axes. Let then $\gamma_k$ be the edge that is parallel to the $k$--th coordinate direction $\bfe_k$. Thus, we can find $\bfm \in \bfN^0$ with $\gamma_k = [\bfm,\bfm+ h \bfe_k]$. This shows that
\begin{align*}
  \partial_k \Lambda_{h,\bfj|_S}^\polP &=
    \begin{dcases}
      h^{-1} & \bfj = \bfm +\bfe_k, \\
      -h^{-1} & \bfj = \bfm, \\
      0 & \text{otherwise}.
    \end{dcases}
\end{align*}
Consequently,
\begin{align*}
  \partial_k (\PiTh w)_{|S}
  &= h^{-1}\skp{\bar{w}}{\Lambda_{\bfm+ \bfe_k}^*}_{\sigma_{\bfm+ \bfe_k}} - h^{-1}\skp{\bar{w}}{\Lambda_{\bfm}^*}_{\sigma_\bfm}
  \\
  &= h^{-1}\int_{\RRd} \bar{w} \left(\indicator_{\sigma_{\bfm+\bfe_k}} \Lambda_{\bfm+\bfe_k}^* - \indicator_{\sigma_{\bfm}} \Lambda_{\bfm}^* \right)\diff{x}.
\end{align*}
Recall now that $\Lambda_{\bfm+  \bfe_k}^*$ is a translate of $\Lambda_\bfm^*$ by $h\bfe_k$. Thus, every line integral in the direction of $x_k$ is zero, \ie
\begin{align*}
  \int_{-\infty}^\infty \left(\indicator_{\sigma_{\bfm+\bfe_k}} \Lambda_{\bfm+\bfe_k}^* - \indicator_{\sigma_{\bfm}} \Lambda_{\bfm}^* \right)\diff{x_k}= 0.
\end{align*}
This means that there is a function $g: \RRd \to \setR$ with compact support and weakly differentiable in the $x_k$ direction which satisfies
\begin{align*}
  \partial_k g = \indicator_{\sigma_{\bfm+\bfe_k}} \Lambda_{\bfm+\bfe_k}^* - \indicator_{\sigma_{\bfm}} \Lambda_{\bfm}^*.
\end{align*}
This proves that,
\begin{align*}
  \partial_k (\PiTh w)_{|S} &= h^{-1} \int_{\RRd} \bar{w} \partial_k g \diff{x} = - h^{-1}\int_{\RRd} \partial_k \bar{w} g \diff{x}.
\end{align*}
Since $\norm{\Lambda_{\bfm+\bfe_k}^*}_{L^\infty(\RR^d)} = \norm{\rho_m^*}_{L^\infty(\RR^d)} \lesssim h^{-d}$ we have $\norm{g}_{L^\infty(\RR^d)} \lesssim h^{1-d}$. Hence,
\begin{align*}
  \sup_S \abs{\partial_k (\PiTh w)} &\lesssim h^{-1} h^{1-d} \int_{\sigma_{\bfm+\bfe_k} \cup \sigma_\bfm} \abs{\partial_k \bar{w}} \diff{x}
    \lesssim \dashint_{\calN_S} \abs{\partial_k w} \diff{x},
\end{align*}
from which \eqref{eq:local-stab1} expediently follows.
\end{proof}

Let us now prove \eqref{eq:BNS} for $\PiQh$.

\begin{prop}[orthotropic stability of $\PiQh$]
For every cube $Q \in \calQ_h$ and all $k \in \{1, \ldots, d\}$, the projection $\PiQh$, defined in \eqref{eq:def-Pi}, satisfies
\begin{align*}
  \int_Q \abs{\partial_k \PiQh w} \diff{x} &\lesssim \int_{\calN_Q} \abs{\partial_k w}\diff{x}, \qquad \forall w \in W^{1,1}_0(\Omega).
\end{align*}
\end{prop}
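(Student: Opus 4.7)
The plan is to mimic almost verbatim the argument for the simplicial projection in Proposition~\ref{prop:OrthoStabPKuhnSimplices}, with only cosmetic modifications to account for the tensor product structure of the $\polQ_1$ basis. As in that proof, I first reduce to an interior estimate by the odd reflection argument, so that $\PiQh w = \sum_{\bfj \in \bfN} \langle \bar w, \Lambda_{\bfj}^*\rangle_{\sigma_\bfj} \Lambda_{h,\bfj}^{\polQ}$ and boundary nodes are handled automatically.

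Fix $Q = Q_\bfm \in \calQ_h$ and a direction $k$. Writing each $\polQ_1$--Lagrange basis in product form $\Lambda_{h,\bfj}^{\polQ}(x) = \prod_i \lambda_{j_i}(x_i)$, one has on $Q$
\begin{align*}
  \partial_k \Lambda_{h,\bfj}^{\polQ}(x) = \pm h^{-1} \prod_{i\neq k} \lambda_{j_i}(x_i),
\end{align*}
with the plus sign exactly when $j_k = m_k+1$. Grouping the $2^d$ vertex contributions to $Q$ in pairs that differ by $\bfe_k$ yields, on $Q$,
\begin{align*}
  \partial_k (\PiQh w)_{|Q}
  = h^{-1} \sum_{\bfeps' \in \{0,1\}^{d-1}} \bigl(\skp{\bar w}{\Lambda_{\bfm+\bfeps'+\bfe_k}^*}_{\sigma_{\bfm+\bfeps'+\bfe_k}} - \skp{\bar w}{\Lambda_{\bfm+\bfeps'}^*}_{\sigma_{\bfm+\bfeps'}} \bigr) \prod_{i\neq k} \lambda_{m_i+\eps'_i}(x_i),
\end{align*}
where $\bfeps'$ enumerates the $2^{d-1}$ choices in the directions $i \neq k$.

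For each fixed $\bfeps'$, the key observation, exactly as in the simplicial proof, is that $\Lambda_{\bfm+\bfeps'+\bfe_k}^*$ is the translate of $\Lambda_{\bfm+\bfeps'}^*$ by $h\bfe_k$. Therefore every line integral in direction $x_k$ of the signed measure $\indicator_{\sigma_{\bfm+\bfeps'+\bfe_k}} \Lambda_{\bfm+\bfeps'+\bfe_k}^* - \indicator_{\sigma_{\bfm+\bfeps'}} \Lambda_{\bfm+\bfeps'}^*$ vanishes, so it is the $x_k$--derivative of a function $g_{\bfeps'}$ with compact support of diameter $\lesssim h$ and $\|g_{\bfeps'}\|_{L^\infty} \lesssim h\cdot h^{-d} = h^{1-d}$. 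Integration by parts in $x_k$ gives
\begin{align*}
  \bigl| \skp{\bar w}{\Lambda_{\bfm+\bfeps'+\bfe_k}^*}_{\sigma_{\bfm+\bfeps'+\bfe_k}} - \skp{\bar w}{\Lambda_{\bfm+\bfeps'}^*}_{\sigma_{\bfm+\bfeps'}} \bigr|
  = \Bigl| \int_{\RRd} \partial_k \bar w\, g_{\bfeps'} \diff x \Bigr|
  \lesssim h^{1-d} \int_{\calN_Q} |\partial_k w| \diff x.
\end{align*}

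To conclude, note that each factor $\prod_{i\neq k} \lambda_{m_i+\eps'_i}(x_i)$ is nonnegative and bounded by $1$ on $Q$, with $\int_Q \prod_{i\neq k} \lambda_{m_i+\eps'_i}(x_i)\diff x \lesssim h^d$. Summing the $2^{d-1}$ contributions, we obtain
\begin{align*}
  \int_Q |\partial_k \PiQh w| \diff x
  \lesssim h^{-1}\cdot h^d \cdot h^{1-d} \int_{\calN_Q} |\partial_k w| \diff x = \int_{\calN_Q} |\partial_k w| \diff x,
\end{align*}
which is the required estimate. The main (and essentially only) structural obstacle is verifying the translation identity $\Lambda_{\bfj+\bfe_k}^* = \Lambda_{\bfj}^*(\cdot - h\bfe_k)$; this is where the symmetric construction of the dual basis on the patches $\sigma_\bfj$ over $\calT_{h/2}$ is crucial, and it is exactly what was already used and established for the simplicial case.
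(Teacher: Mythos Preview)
Your proposal is correct and follows essentially the same approach as the paper: partition the $2^d$ vertices of $Q$ into $2^{d-1}$ pairs differing by $\bfe_k$, then apply the translation/integration-by-parts argument from Proposition~\ref{prop:OrthoStabPKuhnSimplices} to each resulting coefficient difference. Your version is in fact more explicit than the paper's, which omits the tensor-product weight factors $\prod_{i\neq k}\lambda_{m_i+\eps'_i}(x_i)$ and simply states that ``the remaining steps are analogous'' to the simplicial case.
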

\begin{proof}
Let $\bfj \in \bfN$ and $Q = Q_\bfj \in \calQ_h$. Given $k \in \{1, \ldots, d\}$ the vertices of $Q_\bfj$ partition into two sets: those with indices $\bfN_\bfj^-$, and those with indices $\bfN_\bfj^+ = \bfN_\bfj^- + \bfe_k$. After a short calculation we realize that
\begin{align*}
  (\partial_k \PiQh w)|_Q   &=
    \sum_{\bfm \in \bfN_\bfj^-}  \left( \skp{\bar{w}}{\sigma_{\bfj+\bfe_k}}_{\sigma_{\bfj+\bfe_k}} - \skp{\bar{w}}{\sigma_{\bfj}}_{\sigma_{\bfj}}\right).
\end{align*}
The remaining steps are analogously to the proof of Proposition~\ref{prop:OrthoStabPKuhnSimplices} and we obtain that, for all $Q \in \mathcal{Q}_h$,
\begin{align*}
  \sup_Q \abs{\partial_k (\PiQh w)} &\lesssim  \dashint_{\calN_Q} \abs{\partial_k w} \diff{x}. \qedhere
\end{align*}
\end{proof}

Having proved the stability of $\PiQh$ we provide an interesting observation.

\begin{remark}[relation between projections]
There is an interesting relation between $\PiQh$ and $\PiTh$. Let $\mathcal{P}^{\mathbb{Q}}_{\mathbb{P}}\,:\, \VQh \to \VTh$ be the linear bijection defined by
\begin{align*}
  (\mathcal{P}^{\mathbb{Q}}_{\mathbb{P}} w_h)(\vertex(\bfj)) = w_h(\vertex(\bfj)) \qquad \forall \bfj \in \bfN^0,
\end{align*}
\ie it preserves the values at the interior nodes. Then
\begin{align}
  \label{eq:commutates}
  \PiTh &= \mathcal{P}^{\mathbb{Q}}_{\mathbb{P}} \circ \PiQh, &
  \PiQh &= \left(\mathcal{P}^{\mathbb{Q}}_{\mathbb{P}}\right)^{-1} \circ \PiTh.
\end{align}
It turns out that the mapping $\calP_\polP^\polQ$ itself is locally orthotropically stable, \ie for all $Q \in \mathcal{Q}_h$ and all $k \in \set{1,\dots, d}$ we have for $p\in[1,\infty]$
\begin{align}
  \label{eq:transfer-orthotropic}
  \left(\dashint_Q \abs{\partial_k (\PiQh w)}^p \diff{x}\right)^{\frac 1p} \eqsim
  \left( \dashint_Q \abs{\partial_k (\PiTh w)}^p \diff{x} \right)^{\frac 1p}.
\end{align}
It suffices to consider the case $p=1$. The other choices of $p \in (1,\infty]$ follow by inverse estimates.  Since 
$\dim \polV_{h|Q}^\polQ = \dim \polV_{h|Q}^\polP = 2^d$ 
this can be seen by a local, finite dimensional, argument. We just need to show that if one of the two sides of~\eqref{eq:transfer-orthotropic} is zero, then so is the other one. Let us then fix $k \in \set{1,\dots, d}$ and $Q \in \mathcal{Q}_h$.

Suppose that $\partial_k \PiQh w = 0$ on $Q$. Let now $S \subset Q$ be a simplex with $S \in \calT_h$. We have to show that $\partial_k \PiTh w = 0$ on~$S$. The simplex $S$ must necessarily have an edge $\gamma_k = [\vertex, \vertex+h\bfe_k]$ that is parallel to the $x_k$--axis.  Since $\partial_k  \PiQh w = 0$, we have by~\eqref{eq:commutates}
\begin{align*}
  (\PiTh w)(\vertex) = (\PiQh w)( \vertex ) = 
  (\PiQh w)( \vertex + \bfe_k) =    (\PiTh w)(\vertex+\bfe_k).
\end{align*}
Since $\PiTh w $ is linear on $S$, this implies $\partial_k \PiTh w =0$ on $S$ as desired. 

Suppose now that $\partial_k \PiTh w = 0$  on $Q$. We have to show that $\partial_k \PiQh w=0$ on $Q$. For each of the $2^{d-1}$ edges $[\vertex, \vertex+h\bfe_k]$ of $Q$ we have, again by \eqref{eq:commutates},
\begin{align*}
  (\PiQh w)(\vertex) = (\PiTh w)( \vertex ) = 
  (\PiTh w)( \vertex + \bfe_k) =    (\PiQh w)(\vertex+\bfe_k).
\end{align*}
Since $\PiQh w \in \bbQ_1$, this implies that $\partial_k \PiQh w = 0$ on all of~$Q$. This concludes~\eqref{eq:transfer-orthotropic}.
\end{remark}


We conclude the discussion on the projection with some explicit computations.

\begin{remark}[dual basis functions]
\label{rem:dual-basis1}
It is possible to explicitly compute the dual basis functions $\Lambda_\bfj^*$ by inverting the mass matrix of $\mathcal{P}_2(\sigma_\bfj)$. Let us do this for $d=2$. By symmetry it suffices to find $\Lambda_\bfj^*$ in one of the eight (8) triangles that constitute $\sigma_\bfj$. We do this in local coordinates. Let $\rho_0$ denote the Lagrange basis function corresponding to vertex $\vertex(\bfj)$. Let $\rho_1$ and $\rho_2$ denote those of the other vertices. Finally, let $\rho_{\bfj,k}$ denote the ones corresponding to the midpoints, with the convention that $\rho_{\bfj,k}$ is associated to the midpoint opposite the vertex for $\rho_k$. Then, locally, we have
\[
  \Lambda_\bfj^* = \frac{1}{h^2} \left( 36 \rho_0 + 6 \rho_1 + 6 \rho_2 + 6 \rho_{\bfj,0} - \frac32 \rho_{\bfj,1} - \frac32 \rho_{\bfj,2} \right)
\]
\end{remark}

\begin{remark}[cubic meshes]
\label{rem:dual-basis2}
In the case of cubes it is not necessary to refine~$\mathcal{Q}_h$ to $\mathcal{T}_{h/2}$. Instead it is possible to define $\sigma_\bfj$ as the union of all the cubes that contain the node $\vertex(\bfj)$ and with a locally bilinear basis. In that case $\Lambda_\bfj^*$ is also locally bilinear. By symmetry, again, it suffices to calculate $\Lambda_\bfj^*$ in one of these cubes. Let us do so in two dimensions ($d=2$). We let $\rho_{00}$ denote the Lagrange basis function associated with vertex $\vertex(\bfj)$, $\rho_{01}$ and $\rho_{10}$ those corresponding to neighboring nodes, and $\rho_{11}$ the one for the node that is opposite along a diagonal. Then
\begin{align*}
  \Lambda_\bfj^* = \frac{16}{h^2} \left( 4 \rho_{00} - 2 \rho_{01} - 2 \rho_{10} + \rho_{11} \right).
\end{align*}
\end{remark}

\section{Numerical experiments}
\label{sec:Numerics}

Here we present a series of numerical examples aimed at illustrating our results and exploring the limits of our theory. The computations with a simplicial mesh have been carried out with the help of the \texttt{FreeFem++} package \cite{MR3043640}, whereas those with quad meshes use the \texttt{deal.II} library~\cite{dealII95}.

Let us now describe the employed solution procedure for the ensuing nonlinear systems of equations. We first realize that we may write $A_i(t) = B_i(t) t$. For instance, if $A_i(t) = |t|^{p_i-2}t$ then $B_i(t)=|t|^{p_i-2}$. With this splitting we implement the following preconditioned semi--implicit gradient flow. Starting from $u_h^0 \in \Fespace$, for $k \geq 0$, we let $u_h^{k+1} \in \Fespace$ be such that, for all $v_h \in \Fespace$,
\[
  \int_\Omega \GRAD\left( \frac{u_h^{k+1} - u_h^k}\dt \right) \cdot \GRAD v_h \diff{x} + \int_\Omega \sum_{i=1}^d B_i(\partial_i u_h^k) \partial_i u_h^{k+1} \partial_i v_h \diff{x} = \int_\Omega f v_h \diff{x}.
\]
The pseudo--time--step $\dt >0$ is chosen sufficiently small. We iterate until the \emph{energy}
\[
  \calJ( u_h^{k+1} - u_h^k ) = \sum_{i=1}^d \int_\Omega \varphi_i\left( \left|\partial_i( u_h^{k+1} - u_h^k ) \right| \right) \diff{x},
\]
is smaller than a prescribed tolerance.
More efficient methods, like relaxed Ka\v canov iterations \cite{DFTW20,BDS23}, are currently under investigation.

To be able to explore rates of convergence we need an explicit solution to a particular problem. We will consider, for $d =2$, the orthotropic $\bfp$--Laplacian of Example~\ref{ex:OrthoPLap}. We set $q_i = \tfrac{p_i}{p_i-1}$ to be their H\"older conjugate exponents. A direct computation shows that
\begin{equation}
\label{eq:ExplicitSolution}
  u(x_1,x_2) = \frac1{q_1}|x_1|^{q_1} - \frac1{q_2}|x_2|^{q_2}
\end{equation}
satisfies $-\Delta_\bfp u = 0$.

\subsection{Triangular meshes}
\label{sub:TriangleMeshes}

Here we consider the case of triangular meshes. We will consider several cases that illustrate our theory, as well as some that go beyond it.

\subsubsection{Right triangular mesh and smooth solution}
\label{subsub:StructuredSmooth}

\begin{table}
\begin{center}
  \begin{tabular}{|r||r|r||r|r|}
    \hline
    \hline
      $\dim \Fespace$ & $\| \| \nabla(u-u_h) \|_{\ell^p} \|_{L^p(\Omega)}$ & rate & $\| \bfV(\nabla u) - \bfV(\nabla u_h) \|_{\bfL^2(\Omega)}$ & rate \\
    \hline
    \hline
      121 & 2.3505E-01 & --- & 1.7311E-01 & ---\\ 
       \hline 
      441 & 1.1772E-01 & -0.53 & 8.6589E-02 & -0.54\\ 
       \hline 
      1681 & 5.8887E-02 & -0.52 & 4.3299E-02 & -0.52\\ 
       \hline 
      6561 & 2.9447E-02 & -0.51 & 2.1650E-02 & -0.51\\ 
       \hline 
      25921 & 1.4724E-02 & -0.50 & 1.0825E-02 & -0.50\\ 
       \hline 
      103041 & 7.3619E-03 & -0.50 & 5.4127E-03 & -0.50\\ 
       \hline 
      410881 & 3.6810E-03 & -0.50 & 2.7063E-03 & -0.50\\ 
     \hline 
     \hline
  \end{tabular}
\end{center}
\caption{Rates of convergence for the experiment of \S\ref{subsub:StructuredSmooth}. The mesh consists of right triangular meshes $\boxslash$, and the solution is smooth: $p_1 = p_2 = \tfrac32$; see \eqref{eq:ExplicitSolution}.}
\label{tab:StructuredSmooth}
\end{table}

We set $\Omega = (0,1)^2$ and $p_1 = p_2 = p = \tfrac32$ and notice that, according to \eqref{eq:ExplicitSolution}, we have that $u \in W^{2,p}(\Omega)$. Our meshes consist of right triangles obtained by first subdividing in squares, which are then divided by their diagonals in the northeast direction. This results in the following configuration $\boxslash$. The results of the computation are presented in Table~\ref{tab:StructuredSmooth}. As expected, we have that the error both in norm and for $\bfV(\GRAD u)$ decay optimally, \ie as $\calO((\dim \Fespace)^{-\tfrac12})$.

\subsubsection{Right triangular mesh and non smooth solution}
\label{subsub:StructuredNonSmooth}

\begin{table}
\begin{center}
  \begin{tabular}{|r||r|r||r|r|}
    \hline
    \hline
      $\dim \Fespace$ & $\| \| \nabla(u-u_h) \|_{\ell^p} \|_{L^p(\Omega)}$ & rate & $\| \bfV(\nabla u) - \bfV(\nabla u_h) \|_{\bfL^2(\Omega)}$ & rate \\
    \hline
    \hline
      121 & 1.5081E-01 & --- & 1.6464E-01 & --- \\ 
      \hline
      441 & 8.6050E-02 & -0.43 & 8.3581E-02 & -0.52 \\ 
      \hline
      1681 & 4.8716E-02 & -0.43 & 4.2230E-02 & -0.51 \\ 
      \hline
      6561 & 2.7363E-02 & -0.42 & 2.1269E-02 & -0.50 \\ 
      \hline
      25921 & 1.5232E-02 & -0.43 & 1.0694E-02 & -0.50 \\ 
      \hline
      103041 & 8.6129E-03 & -0.41 & 5.3894E-03 & -0.50 \\ 
      \hline
      410881 & 5.6408E-03 & -0.31 & 2.7615E-03 & -0.48 \\ 
   \hline
   \hline
  \end{tabular}
\end{center}
\caption{Rates of convergence for the experiment of \S\ref{subsub:StructuredNonSmooth}. The mesh consists of right triangular meshes $\boxslash$, and the solution is non smooth: $p_1 = p_2 = 3$; see \eqref{eq:ExplicitSolution}.}
\label{tab:StructuredNonSmooth}
\end{table}

We set $\Omega = (0,1)^2$ and $p_1 = p_2 = p = 3$. Our meshes consist of right triangles as in Section~\ref{subsub:StructuredSmooth}. The results of the computation are presented in Table~\ref{tab:StructuredNonSmooth}, where we observe that the convergence rate for $\bfV(\GRAD u_h)$ is optimal, whereas it is not for the norm. This is due to the fact that, as we see from \eqref{eq:ExplicitSolution}, $u \notin W^{2,p}(\Omega)$. Notice, however, that
\[
  \partial_1^{1+s} u \eqsim x_1^{q-1-s},
\]
so that $u \in W^{1+s,p}(\Omega)$ for
\[
  p(q-1-s) > -1, \qquad \implies \qquad s< \frac56
\]
which coincides with the observed rate of convergence, \ie $\tfrac{5/6}2 \approx 0.42$. There is, however, a slight degradation in the rate of convergence at the finest mesh. This is likely due to quadrature error.

\subsubsection{Right triangular mesh and orthotropic solution}
\label{subsub:StructuredAniso}

\begin{table}
\begin{center}
  \begin{tabular}{|r||r|r||r|r||r|r|}
    \hline
    \hline
      $\dim \Fespace$ & $e_{p_1}$ & rate & $e_{p_2}$ & rate & $e_\bfV$ & rate \\
    \hline
    \hline
      121 & 1.2032E-01 & --- & 1.4809E-01 & --- & 1.6893E-01 & --- \\ 
     \hline 
      441 & 6.8595E-02 & -0.43 & 7.4169E-02 & -0.53 & 8.5105E-02 & -0.53 \\ 
     \hline 
      1681 & 3.8905E-02 & -0.42 & 3.7099E-02 & -0.52 & 4.2774E-02 & -0.51 \\ 
     \hline 
      6561 & 2.1990E-02 & -0.42 & 1.8551E-02 & -0.51 & 2.1465E-02 & -0.51 \\ 
     \hline 
      25921 & 1.2400E-02 & -0.42 & 9.2755E-03 & -0.50 & 1.0760E-02 & -0.50 \\ 
     \hline 
      103041 & 6.9782E-03 & -0.42 & 4.6377E-03 & -0.50 & 5.3894E-03 & -0.50 \\ 
     \hline 
      410881 & 3.9126E-03 & -0.42 & 2.3189E-03 & -0.50 & 2.6980E-03 & -0.50 \\ 
    \hline
    \hline
  \end{tabular}
\end{center}
\caption{Rates of convergence for the experiment of \S\ref{subsub:StructuredAniso}. The mesh consists of right triangular meshes $\boxslash$, and the solution is orthotropic: $p_1 =3$ and $ p_2 = \tfrac32$; see \eqref{eq:ExplicitSolution}.
\\
Legend: $e_{p_1} = \| \partial_1(u-u_h) \|_{L^{p_1}(\Omega)}$, $e_{p_2} = \| \partial_2(u-u_h) \|_{L^{p_2}(\Omega)}$, and $e_\bfV = \| \bfV(\nabla u) - \bfV(\nabla u_h) \|_{\bfL^2(\Omega)}$.}
\label{tab:StructuredAniso}
\end{table}

We now set $\Omega = (0,1)^2$, $p_1 = 3$, and $p_2 = \tfrac32$. Our meshes consist of right triangles as in Section~\ref{subsub:StructuredSmooth}. The results are presented in Table~\ref{tab:StructuredAniso}. Once again, the rate of convergence for $\bfV(\GRAD u_h)$ is optimal. Since the behavior, and smoothness, of the solution is now coordinate--dependent we also present the error in approximating each partial derivative in its corresponding norm. It is interesting to observe that the behavior of the error seems to correspond with the smoothness in each coordinate direction.

\subsubsection{Union Jack mesh}
\label{subsub:UnionJack}

\begin{table}
\begin{center}
  \begin{tabular}{|r||r|r||r|r||r|r|}
    \hline
    \hline
      $\dim \Fespace$ & $e_{p_1}$ & rate & $e_{p_2}$ & rate & $e_\bfV$ & rate \\
    \hline
    \hline
    121 & 1.2088E-01 & --- & 1.4766E-01 & --- & 1.6894E-01 & --- \\ 
     \hline 
    441 & 6.8459E-02 & -0.44 & 7.4240E-02 & -0.53 & 8.5104E-02 & -0.53 \\ 
     \hline 
    1681 & 3.8872E-02 & -0.42 & 3.7110E-02 & -0.52 & 4.2774E-02 & -0.51 \\ 
     \hline 
    6561 & 2.1982E-02 & -0.42 & 1.8553E-02 & -0.51 & 2.1465E-02 & -0.51 \\ 
     \hline 
    25921 & 1.2400E-02 & -0.42 & 9.2758E-03 & -0.50 & 1.0760E-02 & -0.50 \\ 
     \hline 
    103041 & 6.9829E-03 & -0.42 & 4.6378E-03 & -0.50 & 5.3895E-03 & -0.50 \\ 
     \hline 
    410881 & 3.9276E-03 & -0.42 & 2.3189E-03 & -0.50 & 2.6981E-03 & -0.50 \\ 
     \hline 
    \hline
  \end{tabular}
\end{center}
\caption{Rates of convergence for the experiment of \S\ref{subsub:UnionJack}. The mesh consists of a Union Jack pattern $\mathrlap{\boxplus}\boxtimes$, and the solution is orthotropic: $p_1 =3$ and $ p_2 = \tfrac32$; see \eqref{eq:ExplicitSolution}.
\\
Legend: $e_{p_1} = \| \partial_1(u-u_h) \|_{L^{p_1}(\Omega)}$, $e_{p_2} = \| \partial_2(u-u_h) \|_{L^{p_2}(\Omega)}$, and $e_\bfV = \| \bfV(\nabla u) - \bfV(\nabla u_h) \|_{\bfL^2(\Omega)}$.}
\label{tab:UnionJack}
\end{table}

In this experiment, we keep $\Omega = (0,1)^2$, $p_1 = 3$, and $p_2 = \tfrac32$. In this case, however, our meshes are obtained by first uniformly subdividing our domain into squares and then each square is subdivided into eight triangles by first cutting the square along both diagonals, and then by joining the midpoints of opposite sides. This results in every square having the following configuration: $\mathrlap{\boxplus}\boxtimes$. Since every triangle has two sides parallel to the coordinate axes, this setting does fit into the framework of Section~\ref{sec:interpolant}.

The results of our computations are presented in Table~\ref{tab:UnionJack}. Once again, $\bfV(\GRAD u_h)$ converges optimally.

\subsubsection{A structured mesh}
\label{subsub:StAndrewCross}

\begin{table}
\begin{center}
  \begin{tabular}{|r||r|r||r|r||r|r|}
    \hline
    \hline
      $\dim \Fespace$ & $e_{p_1}$ & rate & $e_{p_2}$ & rate & $e_\bfV$ & rate \\
    \hline
    \hline
    221 & 1.2109E-01 & --- & 1.4803E-01 & --- & 1.6895E-01 & --- \\ 
    \hline 
    841 & 6.8985E-02 & -0.42 & 7.4125E-02 & -0.52 & 8.5111E-02 & -0.51 \\ 
    \hline 
    3281 & 3.9112E-02 & -0.42 & 3.7074E-02 & -0.51 & 4.2777E-02 & -0.51 \\ 
    \hline 
    12961 & 2.2103E-02 & -0.42 & 1.8538E-02 & -0.50 & 2.1466E-02 & -0.50 \\ 
    \hline 
    51521 & 1.2463E-02 & -0.42 & 9.2690E-03 & -0.50 & 1.0760E-02 & -0.50 \\ 
    \hline 
    205441 & 7.0175E-03 & -0.42 & 4.6345E-03 & -0.50 & 5.3896E-03 & -0.50 \\ 
    \hline 
    820481 & 3.9466E-03 & -0.42 & 2.3172E-03 & -0.50 & 2.6982E-03 & -0.50 \\ 
    \hline 
    \hline
  \end{tabular}
\end{center}
\caption{Rates of convergence for the experiment of \S\ref{subsub:StAndrewCross}. The mesh is structured and with the following pattern $\boxtimes$, and the solution is orthotropic: $p_1 =3$ and $ p_2 = \tfrac32$; see \eqref{eq:ExplicitSolution}.
\\
Legend: $e_{p_1} = \| \partial_1(u-u_h) \|_{L^{p_1}(\Omega)}$, $e_{p_2} = \| \partial_2(u-u_h) \|_{L^{p_2}(\Omega)}$, and $e_\bfV = \| \bfV(\nabla u) - \bfV(\nabla u_h) \|_{\bfL^2(\Omega)}$.}
\label{tab:StAndrewCross}
\end{table}

The setting here is as before, but the mesh is now obtained by uniformly subdividing our domain into squares and then each square is subdivided into four triangles by cutting the square along both diagonals. This results in every square having the following configuration: $\boxtimes$. Notice that, in this scenario, no triangle has more than one side parallel to the coordinate axes. Due to this fact, this setting does not fit into the framework of Section~\ref{sec:interpolant}.

The results of our computations are presented in Table~\ref{tab:StAndrewCross}. Once again, $\bfV(\GRAD u_h)$ converges optimally. Interestingly, we observe that each coordinate derivative is approximated at the rate dictated by its regularity. The smoothness, or lack thereof, of the other coordinate derivative does not influence the rate of convergence.

\subsubsection{An unstructured mesh}
\label{subsub:Unstructured}

\begin{table}
\begin{center}
  \begin{tabular}{|r||r|r||r|r||r|r|}
    \hline
    \hline
      $\dim \Fespace$ & $e_{p_1}$ & rate & $e_{p_2}$ & rate & $e_\bfV$ & rate \\
    \hline
    \hline
    142 & 1.2748E-01 & --- & 1.4203E-01 & --- & 1.8365E-01 & --- \\ 
    \hline 
    515 & 7.2338E-02 & -0.44 & 7.1707E-02 & -0.53 & 9.2250E-02 & -0.53 \\ 
    \hline 
    1978 & 4.0030E-02 & -0.44 & 3.6072E-02 & -0.51 & 4.6482E-02 & -0.51 \\ 
    \hline 
    7695 & 2.2335E-02 & -0.43 & 1.7736E-02 & -0.52 & 2.3421E-02 & -0.50 \\ 
    \hline 
    30712 & 1.2326E-02 & -0.43 & 8.8891E-03 & -0.50 & 1.1665E-02 & -0.50 \\ 
    \hline 
    122163 & 6.8561E-03 & -0.42 & 4.4260E-03 & -0.51 & 5.8698E-03 & -0.50 \\ 
    \hline 
    490336 & 3.8249E-03 & -0.42 & 2.2233E-03 & -0.50 & 2.9463E-03 & -0.50 \\ 
    \hline 
    \hline
  \end{tabular}
\end{center}
\caption{Rates of convergence for the experiment of \S\ref{subsub:Unstructured}. The mesh is a Delaunay triangulation based on a uniform partition of each side of the square, and the solution is orthotropic: $p_1 =3$ and $ p_2 = \tfrac32$; see \eqref{eq:ExplicitSolution}.
\\
Legend: $e_{p_1} = \| \partial_1(u-u_h) \|_{L^{p_1}(\Omega)}$, $e_{p_2} = \| \partial_2(u-u_h) \|_{L^{p_2}(\Omega)}$, and $e_\bfV = \| \bfV(\nabla u) - \bfV(\nabla u_h) \|_{\bfL^2(\Omega)}$.}
\label{tab:Unstructured}
\end{table}

\begin{figure}
  \begin{center}
    \includegraphics[scale=0.12]{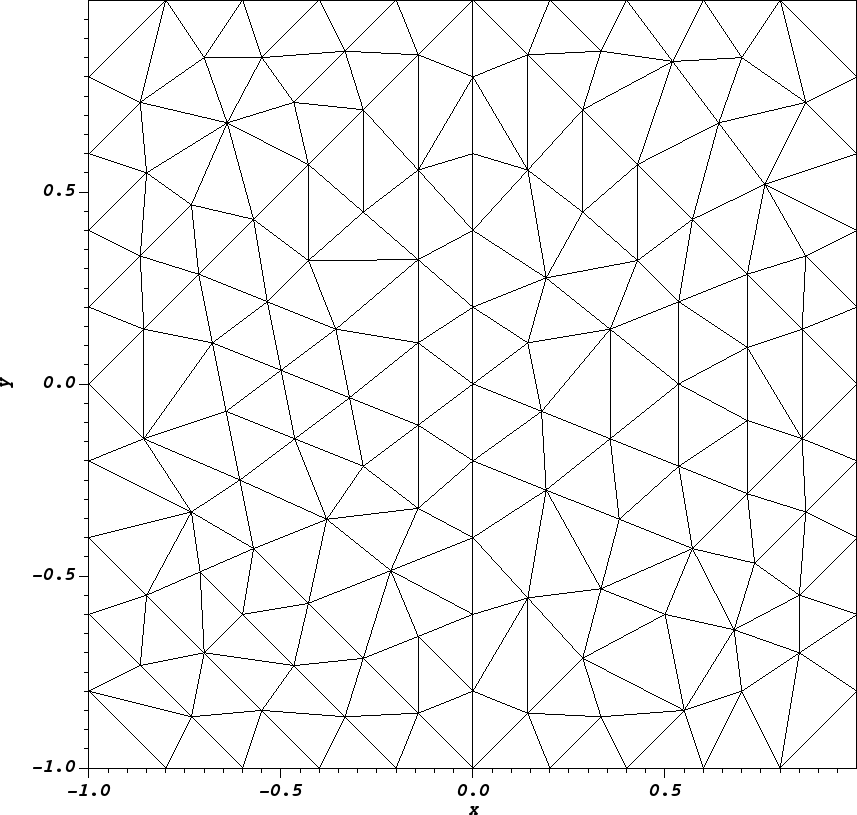}
    \includegraphics[scale=0.12]{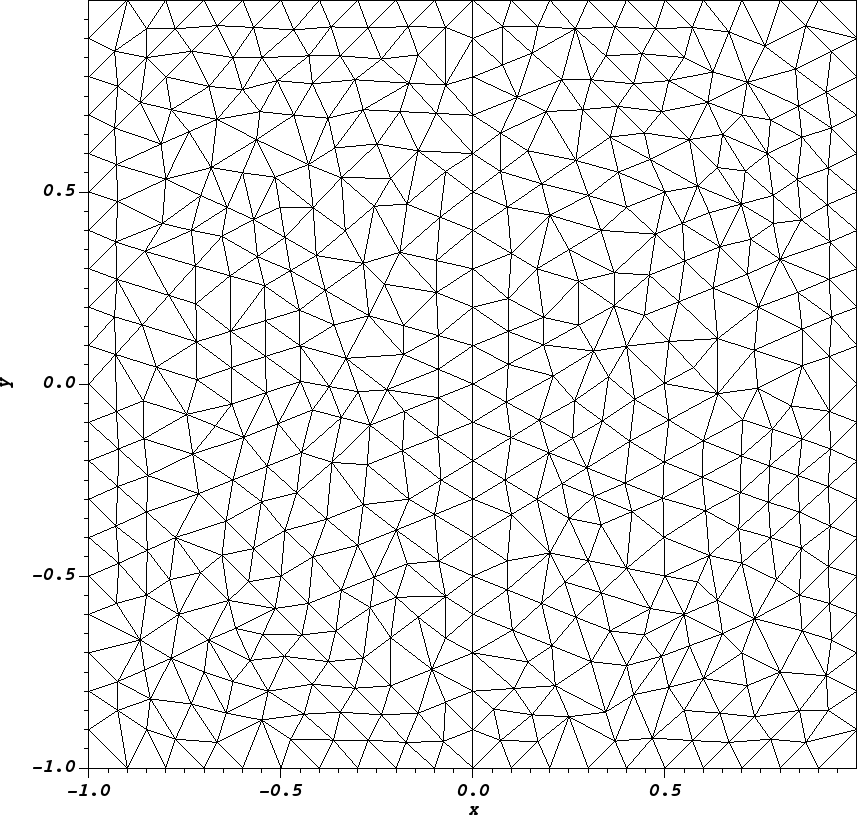}
    \includegraphics[scale=0.12]{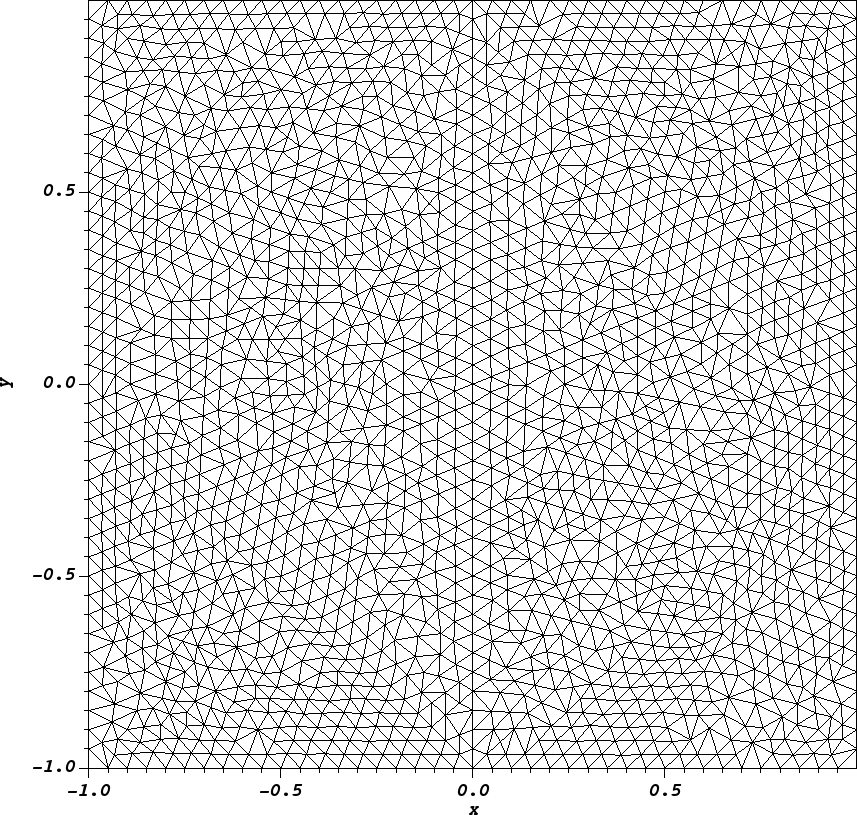}
  \end{center}
\caption{Some sample unstructured meshes for the experiments of Section~\ref{subsub:Unstructured}. A predetermined number of points is placed uniformly on each side of the square. On their basis, a Delaunay triangulation is constructed.}
\label{fig:UnstructuredMeshes}
\end{figure}

In this example we keep $\Omega = (0,1)^2$, $p_1 = 3$, and $p_2 = \tfrac32$. The mesh, in this case, is obtained by uniformly placing a number of points on each side of the square, and then constructing a Delaunay triangulation on the basis of these points. The mesh is, therefore, unstructured. Some sample meshes are presented in Figure~\ref{fig:UnstructuredMeshes}.

The results are presented in Table~\ref{tab:Unstructured}. As before, despite the fact that this mesh does not fit our theory, the rate of convergence for $\bfV(\GRAD u_h)$ is optimal. The same observations as before can be made regarding the approximation of coordinate derivatives.

\subsubsection{A circle}
\label{subsub:Circle}

\begin{table}
\begin{center}
  \begin{tabular}{|r||r|r||r|r||r|r|}
    \hline
    \hline
      $\dim \Fespace$ & $e_{p_1}$ & rate & $e_{p_2}$ & rate & $e_\bfV$ & rate \\
    \hline
    \hline
      15 & 1.1300E+00 & --- & 2.5032E+00 & --- & 2.1302E+00 & --- \\ 
       \hline 
      45 & 6.0409E-01 & -0.57 & 1.5565E+00 & -0.43 & 1.3393E+00 & -0.42 \\ 
       \hline 
      163 & 3.5001E-01 & -0.42 & 8.5932E-01 & -0.46 & 6.2778E-01 & -0.59 \\ 
       \hline 
      603 & 2.0171E-01 & -0.42 & 4.0154E-01 & -0.58 & 3.5075E-01 & -0.44 \\ 
       \hline 
      2348 & 1.0500E-01 & -0.48 & 2.0799E-01 & -0.48 & 1.6923E-01 & -0.54 \\ 
       \hline 
      8974 & 5.5161E-02 & -0.48 & 1.0502E-01 & -0.51 & 8.7200E-02 & -0.49 \\ 
       \hline 
      36011 & 3.0611E-02 & -0.42 & 5.1106E-02 & -0.52 & 4.1230E-02 & -0.54 \\ 
    \hline 
    \hline
  \end{tabular}
\end{center}
\caption{Rates of convergence for the experiment of \S\ref{subsub:Circle}. The domain is the unit circle $\Omega = B_1$, and the solution is orthotropic: $p_1 =3$ and $ p_2 = \tfrac32$; see \eqref{eq:ExplicitSolution}.
\\
Legend: $e_{p_1} = \| \partial_1(u-u_h) \|_{L^{p_1}(\Omega)}$, $e_{p_2} = \| \partial_2(u-u_h) \|_{L^{p_2}(\Omega)}$, and $e_\bfV = \| \bfV(\nabla u) - \bfV(\nabla u_h) \|_{\bfL^2(\Omega)}$.}
\label{tab:Circle}
\end{table}

Let us set $\Omega = B_1$, \ie the unit circle, $p_1 = 3$, and $p_2 = \tfrac32$. The results are presented in Table~\ref{tab:Circle}. Despite the fact that this setting does not fit the assumptions of Section~\ref{sec:interpolant}, we observe that $\bfV(\GRAD u_h)$ does converge with an optimal rate. It is also interesting to observe that the coordinate derivatives still seem to converge at the independent rate dictated by their independent regularity.

\subsection{Quadrilateral meshes}
\label{sub:QuadMeshes}

We consider the exact solution that is given by \eqref{eq:ExplicitSolution} with $p_1 = 3$, and $p_2 = \tfrac32$.

\subsubsection{Structured mesh}
\label{subsub:StructuredQuads}

\begin{table}
\begin{center}
  \begin{tabular}{|r||r|r||r|r||r|r|}
    \hline
    \hline
      $\dim \Fespace$ & $e_{p_1}$ & rate & $e_{p_2}$ & rate & $e_\bfV$ & rate \\
    \hline
    \hline
    289 & 8.2750E-02 & --- & 9.2969E-02 & --- & 1.0616E-01 & --- \\ 
    \hline 
    1089 & 4.6972E-02 & -0.43 & 4.6515E-02 & -0.52 & 5.3392E-02 & -0.52 \\ 
    \hline 
    4225 & 2.6566E-02 & -0.42 & 2.3261E-02 & -0.51 & 2.6805E-02 & -0.51 \\ 
    \hline 
    16641 & 1.4989E-02 & -0.42 & 1.1631E-02 & -0.51 & 1.3440E-02 & -0.50 \\ 
    \hline 
    66049 & 8.4431E-03 & -0.42 & 5.8155E-03 & -0.50 & 6.7336E-03 & -0.50 \\ 
    \hline 
    263169 & 4.7505E-03 & -0.42 & 2.9078E-03 & -0.50 & 3.3715E-03 & -0.50 \\ 
    \hline 
    \hline
  \end{tabular}
\end{center}
\caption{Rates of convergence for the experiment of \S\ref{subsub:StructuredQuads}. The domain is  $\Omega = (0,1)^2$ and the mesh is uniform quadrilateral. The solution is orthotropic: $p_1 =3$ and $ p_2 = \tfrac32$; see \eqref{eq:ExplicitSolution}.
\\
Legend: $e_{p_1} = \| \partial_1(u-u_h) \|_{L^{p_1}(\Omega)}$, $e_{p_2} = \| \partial_2(u-u_h) \|_{L^{p_2}(\Omega)}$, and $e_\bfV = \| \bfV(\nabla u) - \bfV(\nabla u_h) \|_{\bfL^2(\Omega)}$.}
\label{tab:StructuredQuads}
\end{table}

Set $\Omega = (0,1)^2$ and we let the mesh be quadrilateral and uniform.
The results for a structured mesh are given in Table~\ref{tab:StructuredQuads}. As in the previous examples, $\bfV(\GRAD u_h)$ converges with an optimal rate. As before, each coordinate derivative converges at the rate dictated by its orthotropic regularity.

\subsubsection{A non simply connected domain}
\label{subsub:Donut}

\begin{figure}
  \begin{center}
    \includegraphics[scale=0.3]{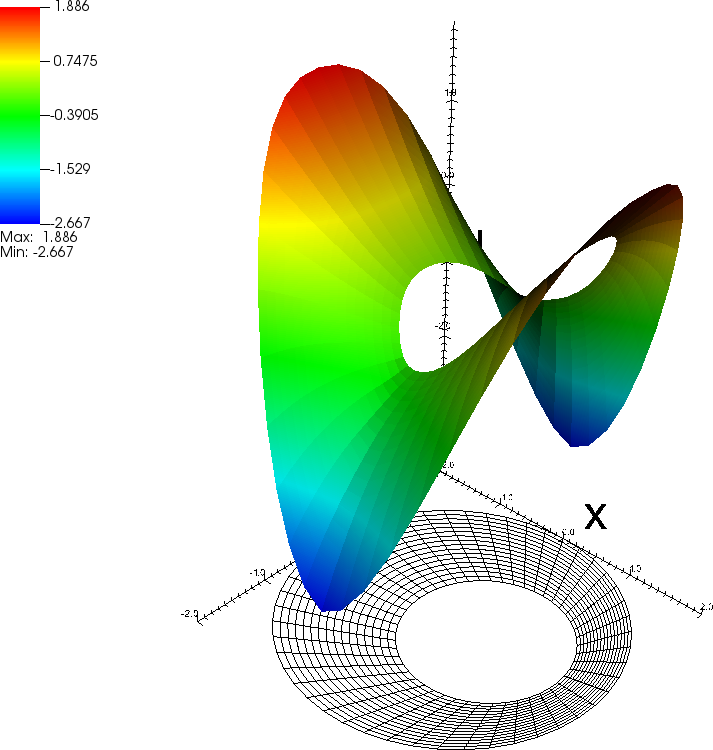}
  \end{center}
\caption{Sample mesh and computed solution for the experiment of \S\ref{subsub:Donut}.}
\label{fig:Donut}
\end{figure}

\begin{table}
\begin{center}
  \begin{tabular}{|r||r|r|r|r||r|r|}
    \hline
    \hline
      $\dim \Fespace$ & $e_{p_1}$ & rate & $e_{p_2}$ & rate & $e_\bfV$ & rate \\
    \hline
    \hline
    816 & 1.5662E-01 & --- & 2.0206E-01 & --- & 2.0456E-01 & --- \\ 
    \hline 
    3168 & 8.7980E-02 & -0.43 & 1.0141E-01 & -0.51 & 1.0247E-01 & -0.51 \\ 
    \hline 
    12480 & 4.9319E-02 & -0.42 & 5.0910E-02 & -0.50 & 5.1402E-02 & -0.50 \\ 
    49536 & 2.7667E-02 & -0.42 & 2.5547E-02 & -0.50 & 2.5791E-02 & -0.50 \\ 
    \hline 
    197376 & 1.5547E-02 & -0.42 & 1.2814E-02 & -0.50 & 1.2941E-02 & -0.50 \\ 
    \hline 
    787968 & 8.7554E-03 & -0.41 & 6.4248E-03 & -0.50 & 6.4924E-03 & -0.50 \\ 
    \hline 
    \hline
  \end{tabular}
\end{center}
\caption{Rates of convergence for the experiment of \S\ref{subsub:Donut}. The domain is  $\Omega = B_2(0)\setminus \overline{B_1(\tfrac12,0)}$ and the mesh is quadrilateral. The solution is orthotropic: $p_1 =3$ and $ p_2 = \tfrac32$; see \eqref{eq:ExplicitSolution}.
\\
Legend: $e_{p_1} = \| \partial_1(u-u_h) \|_{L^{p_1}(\Omega)}$, $e_{p_2} = \| \partial_2(u-u_h) \|_{L^{p_2}(\Omega)}$, and $e_\bfV = \| \bfV(\nabla u) - \bfV(\nabla u_h) \|_{\bfL^2(\Omega)}$.}
\label{tab:Donut}
\end{table}

As a final example we consider $\Omega = B_2(0)\setminus \overline{B_1(\tfrac12,0)}$, where $B_r(z)$ denotes the (open) ball of radius $r$ centered at $z \in \Real^d$. Notice that the domain is not simply connected. A plot of the mesh and the solution are presented in Figure~\ref{fig:Donut}. The results of the computation are summarized in Table~\ref{tab:Donut}.

\printbibliography 


\end{document}